\setlist[enumerate,1]{label={\normalfont(\arabic*)}}
\setlist[enumerate,2]{label={\normalfont(\alph*)}}
\setlist[enumerate,3]{label={\normalfont(\roman*)}}
\theoremstyle{plain}
\newtheorem{theorem}{Theorem}
\newtheorem{proposition}[theorem]{Proposition}
\newtheorem{corollary}[theorem]{Corollary}
\newtheorem{lemma}[theorem]{Lemma}
\theoremstyle{definition}
\newtheorem{definition}[theorem]{Definition}
\newtheorem{example}[theorem]{Example}
\Crefname{mtheorem}{Theorem}{Theorems}
\newcommand\torcidlink[1]{\rlap{\raisebox{-.3ex}{\scalebox{1.4}{\orcidlink{#1}}}}}
\newcommand{\abs}[1]{{\lvert #1 \rvert}}
\newcommand{\pv}[1]{\mathbf{#1}}
\newcommand{\pvI}[1]{\llbracket#1\rrbracket}
\newcommand{\pvM}{\mathbin{\raisebox{.4pt}{\textup{\textcircled{\raisebox{.6pt}{\smaller[2]\textit{m}}}}}}}
\newcommand{\resL}{\mathrm{res}_{\mathrlap{\mathcal{L}}\phantom{\mathcal{R}}}\,}
\newcommand{\resR}{\mathrm{res}_\mathcal{R}\,}
\newcommand{\ResL}{\mathrm{Res}_{\mathrlap{\mathcal{L}}\phantom{\mathcal{R}}}}
\begin{document}

\title[Finite Semigroups Satisfying an Identity $X_1 \dots X_n \approx \rho(X_1, \dots, X_n)$]
  {Finite Semigroups Satisfying an Identity $\bm{X_1 \ldots X_n \approx \rho(X_1, \ldots, X_n)}$}

\author[A. Thumm]{Alexander Thumm \torcidlink{0009-0005-4240-2045}}
\address{University of Siegen, Hölderlinstraße 3, 57076 Siegen, Germany}
\email{alexander.thumm@uni-siegen.de}

\keywords{Finite semigroups, semigroup identities, pseudovarieties}
\subjclass[2020]{Primary 20M07}

\begin{abstract}
  We determine the maximal pseudovarieties of finite semigroups that satisfy an identity of the form $x_1 \dots x_n \approx \rho(x_1, \dots, x_n)$. 
  Applying this classification, we further show that a pseudovariety of permutative semigroups satisfies a common permutation identity if and only if it satisfies an identity of the above form or, equivalently, if it does not contain $\pv{T} = \pvI{x^2 \approx xyx \approx 0}$.
\end{abstract}

\maketitle

\section{Introduction}\label{sec:introduction}

Identities play a fundamental role in the study of semigroups, as they serve to define and characterize classes of semigroups (known as varieties), and provide a basis for analyzing structural properties. 
They are also central to understanding the connections between semigroups, automata, and formal languages.

One may consider the simplest identities to be those that allow the expression of an $n$-ary product of arbitrary elements in an alternative way, i.e., those of the form
\[
 x_1 \dots x_n \approx \rho(x_1, \dots, x_n).
\]

Even in the unary case, these identities give rise to deep and intriguing questions such as the Burnside problem~\cite{Burnside1902} (see also the survey~\cite{Adian2010}), which asks whether a finitely generated group satisfying the identity $x \approx x^{n + 1}$ must necessarily be finite.

In the late 1960s, binary product identities of the form $xy \approx y^{p + 1} x^{q+1}$ ($p + q \geq 1$) were considered by Tully in an unpublished manuscript (see~\cite{Tamura1969}) showing that every semigroup satisfying such an identity is an inflation of a semilattice of Abelian groups with exponents dividing $p$ and $q$.
In particular, every such semigroup is itself commutative.
Tamura~\cite{Tamura1969} considered identities $xy \approx \rho(x, y)$ where $\rho(x,y)$ starts with $y$, ends with $x$, and has length at least three -- every semigroup satisfying such an identity is, once again, an inflation of a semilattice of groups and thus commutative if its subgroups are.
The question of commutativity, reduced from semigroups to groups, has consequently attracted further research; see, e.g., \cite{Kowol1976, Stein2014, Moravec2019, Moravec2020}.

Notably, Tamura’s results have been further generalized in a series of articles by Putcha and Weissglass~\cite{PutchaWeissglass1971,PutchaWeissglass1972}, wherein they explore product identities that may depend on the specific elements involved, as well as identities of higher arity.

\medskip

In this article, we will exclusively concern ourselves with finite semigroups and therefore allow for the right-hand side of a product identity $x_1 \dots x_n \approx \rho(x_1, \dots, x_n)$ to be a nonempty \emph{profinite} word $\rho(x_1, \dots, x_n)$ in the variables $x_1, \dots, x_n$, as is most natural in this setting.
The primary examples of such identities are the members of the following two families.
These identities capture aspects of regularity and commutativity -- properties that also arise for general semigroups, as demonstrated by the above-mentioned results.

\smallskip

$\bullet$\hspace{\labelsep}The first family comprises product identities of the form
\[
     x_1 \dots x_n \approx x_1 \dots x_{i-1} (x_i \dots x_j)^{\omega + 1} x_{j+1} \dots x_n
\]
parametrized by the arity $n$ and the indices $1 \leq i \leq j \leq n$.
These generalize the defining identity $x \approx x^{\omega + 1}$ of completely regular semigroups, which is the only member of arity $n = 1$ of the family.
We call any finite semigroup or pseudovariety satisfying such an identity \emph{almost completely regular}.

\smallskip

$\bullet$\hspace{\labelsep}The second family comprises \emph{permutation identities}, i.e.,
\[
     x_1 \dots x_n \approx x_{1\sigma} \dots x_{n\sigma}
\]
where $\sigma \in \mathfrak{S}_n$ is a nontrivial permutation of the symbols $1, \dots, n$.
Its minimal member is the defining identity $xy \approx yx$ of commutative semigroups.
Semigroups and pseudovarieties satisfying an identity as above are called \emph{permutative}.\footnote{Be aware that some authors define a permutative pseudovariety as a pseudovariety consisting of permutative semigroups, meaning that the permutation identity may differ between its members.}

\medskip

The main result of this article is that the members of the two families above represent the most general of product identities; its formal statement is as follows.

\begin{restatable}{mtheorem}{thmmaximal}\label{thm:main}
  Let $\varepsilon\colon x_1 \dots x_n \approx \rho(x_1, \dots, x_n)$ be a nontrivial $n$-ary product identity.
  Then at least one of the following two statements holds.
  \begin{enumerate}
    \item There exist indices $1 \leq i \leq j \leq n$ such that 
    \[
      \varepsilon \models x_1 \dots x_n \approx x_1 \dots x_{i-1} (x_i \dots x_j)^{\omega + 1} x_{j+1} \dots x_n.
    \]
    \item There exists a nontrivial permutation $\sigma \in \mathfrak{S}_n$ such that 
    \[
      \varepsilon \models x_1 \dots x_n \approx x_{1\sigma} \dots x_{n\sigma}.
    \]
  \end{enumerate}

  In particular, every finite semigroup or pseudovariety satisfying the identity $\varepsilon$ is either almost completely regular, or permutative, or both.
\end{restatable}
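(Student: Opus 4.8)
The plan is to extract a few coarse, continuous invariants of the profinite word $\rho$ and to split into a ``multilinear'' case that delivers (2) almost for free and a ``non-multilinear'' case in which one fabricates an identity of type (1). I work inside the free profinite semigroup $\overline{\Omega}_n\pv S$ on $x_1,\dots,x_n$, so that $w:=x_1\cdots x_n$ and $\rho$ are elements of it and nontriviality of $\varepsilon$ means exactly $w\ne\rho$. For each $k$ let $\#_k\colon\overline{\Omega}_n\pv S\to(\overline{\Omega}_1\pv S)^1$ (the free profinite monoid on one generator) be the continuous homomorphism sending $x_k$ to the generator and every other $x_i$ to $1$, and call $\#_k\rho$ the \emph{multiplicity} of $x_k$ in $\rho$; it lies in $\mathbb N\cup\widehat{\mathbb Z}$, a value in $\widehat{\mathbb Z}\setminus\{0\}$ being an ``infinite'' exponent. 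Call $\rho$ \emph{multilinear} if $\#_k\rho=1$ for all $k$. Since $\{1\}$ is clopen in $(\overline{\Omega}_1\pv S)^1$, the set of multilinear words is clopen in $\overline{\Omega}_n\pv S$; within the dense subsemigroup $\Omega_n\pv S$ of finite words it consists of exactly the $n!$ permutation words, so by density it \emph{equals} that finite (hence closed) set. Therefore, if $\rho$ is multilinear then $\rho=x_{1\sigma}\cdots x_{n\sigma}$ for some $\sigma\in\mathfrak S_n$; nontriviality forces $\sigma\ne\operatorname{id}$, and then $\varepsilon$ literally is the permutation identity of $\sigma$, so (2) holds.

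Assume henceforth $\#_k\rho\ne1$ for some $k$; the target is (1). If $\#_k\rho=0$ — that is, $x_k$ does not occur in $\rho$ — then substituting $x_k\mapsto x_k^{\omega+1}$ in $\varepsilon$ leaves the right-hand side untouched, whence $\varepsilon\models x_1\cdots x_{k-1}x_k^{\omega+1}x_{k+1}\cdots x_n\approx\rho\approx x_1\cdots x_n$, which is (1) with $i=j=k$. So the main case is that some $x_k$ occurs in $\rho$ with multiplicity $\#_k\rho\notin\{0,1\}$, i.e.\ is ``over-represented''. Here I claim $\varepsilon$ forces an identity of type (1) whose distinguished block $x_i\cdots x_j$ is read off from $k$ together with $\operatorname{first}(\rho)$ and $\operatorname{last}(\rho)$ (the images of $\rho$ under the continuous projections onto the left- and right-zero semigroup on the generators): in the cleanest situations one gets $i=j=k$ (for instance $x_1x_2x_3\approx x_2x_1^2x_3$ yields $x_1x_2x_3\approx x_1^{\omega+1}x_2x_3$) or $i=1$, $j=n$.

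The mechanism is \emph{pumping}. Substituting $x_k\mapsto x_k^t$ in $\varepsilon$, and substituting $\varepsilon$ into itself, one iterates the rewrite ``replace the factor $B=x_i\cdots x_j$ of $w$ by $B^t$'', passes to the limit $t\to\omega$ to bring in the idempotent $B^{\omega}$, and finally applies $\varepsilon$ once more to fold that idempotent back, arriving at $x_1\cdots x_{i-1}B^{\omega+1}x_{j+1}\cdots x_n\approx w$. I expect this to be the crux, for two reasons. First, because $\rho$ is only profinite, ``over-represented'' may mean an infinite ($\widehat{\mathbb Z}$-valued) exponent rather than a literal repetition, and the naive reductions then close up circularly; one escapes by extracting the low-arity consequences of $\varepsilon$, obtained by collapsing or idempotent-izing the other variables, which permit inserting an idempotent power $x_k^\omega$ as a one-sided factor — exactly the leverage the pumping needs — and thereby pin down how $\#_k\rho$ must behave modulo the group exponents of $\pvI{\varepsilon}$.

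Second, the pumping must be organised as a finite case analysis on the shape of $\rho$: principally on whether $\operatorname{first}(\rho)=x_1$ and $\operatorname{last}(\rho)=x_n$, and on whether the over-represented occurrences of $x_k$ abut the ends of $\rho$ or sit strictly inside. Each case is a short but delicate chain of substitutions, and the real work is the bookkeeping: keeping every intermediate identity in the constrained shape $x_1\cdots x_{i-1}(\cdots)x_{j+1}\cdots x_n$ and correctly reading off the pair $(i,j)$. Once an identity of type (1) has been produced in all cases, the ``in particular'' clause is immediate from the definitions of almost completely regular and permutative.
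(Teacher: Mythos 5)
Your preliminary reductions are fine, and in two places even slicker than the paper's: the clopen-plus-density argument showing that a profinite word in which every variable has multiplicity one must be one of the $n!$ finite permutation words correctly disposes of case (2), and when some variable is absent from $\rho$ the substitution $x_k \gets x_k^{\omega+1}$ does immediately yield an identity of type (1) with $i=j=k$ (the paper instead passes through \cref{lem:regular} and continues). But the heart of the theorem is the remaining case, a genuine (regular) expansion identity, and there your proposal does not contain a proof: the ``pumping'' paragraph is a program, not an argument, and you yourself flag that the profinite exponents make the naive reductions ``close up circularly'' and that everything hinges on a case analysis you do not carry out. More importantly, the specific mechanism you propose is not adequate as stated. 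Reading the block $x_i \dots x_j$ off from the over-represented variable $k$ together with $\operatorname{first}(\rho)$ and $\operatorname{last}(\rho)$ cannot work in general, because the correct pair $(i,j)$ is governed by how long a prefix $x_1\dots x_{i-1}$ and suffix $x_{j+1}\dots x_n$ of the left-hand side can be split off from $\rho$ (the paper's notion of left/right primitivity), not by which variable repeats: for $\varepsilon\colon x_1x_2x_3 \approx x_2x_1^2x_3$ the derivable identity produced by the general method has block $(x_1x_2)^{\omega+1}$, and your recipe of substituting $x_k \gets x_k^t$ and ``replacing the factor $B=x_i\cdots x_j$ by $B^t$'' presupposes that $\rho$ already has the shape $x_1\dots x_{i-1}B^t x_{j+1}\dots x_n$, which is false in general; a single-variable substitution can never manufacture an idempotent block spanning several distinct variables.

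What is missing is precisely the structural machinery the paper builds for this case: (i) a compactness step (\cref{lem:infinite}) allowing one to assume $\abs{\rho}$ exceeds $\abs{S}$, so that inside a fixed finite semigroup the right-hand side can be forced through an idempotent; (ii) the semigroup-theoretic core (\cref{lem:ideal}) showing that a \emph{primitive} expansion identity forces $S^n = SES = I(S)$, i.e.\ the identity $x_1\dots x_n \approx (x_1\dots x_n)^{\omega+1}$, which is where the element-level idempotent manipulation actually happens; and (iii) an induction on arity for non-primitive identities, $\varepsilon = \resL\varepsilon'$ or $\resR\varepsilon'$, which requires knowing that one may apply the restriction operation to a \emph{consequence} of $\varepsilon'$ — the content of \cref{pro:restriction}, whose proof (via Reiterman's theorem, completeness of equational logic, and Costa's three-step argument) is itself a substantial part of the paper. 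Your sketch has no analogue of (ii) or (iii), and the ``low-arity consequences obtained by collapsing or idempotent-izing the other variables'' that you invoke to escape the circularity are exactly the unproved claim. So the proposal handles the easy cases correctly but leaves the crux of \cref{thm:main} unestablished.
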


We also prove the following characterization, which expresses the deep connection between product identities and the pseudovarieties of nilpotent semigroups 
\[
     \pv{T} \coloneqq \pvI{xyx \approx x^2 \approx 0}
     \quad\text{and}\quad
     \pv{U} \coloneqq \pvI{xy \approx yx, x^2 \approx 0},
\]
as well as their $k$-nilpotent counterparts $\pv{T}_k \coloneqq \pv{T} \cap \pv{N}_k$ and $\pv{U}_k \coloneqq \pv{U} \cap \pv{N}_k$. 

\begin{restatable}{mtheorem}{thmobstructions}\label{thm:obstructions}
  Let $\pv{V}$ be a pseudovariety, and let $k \geq 1$.
  Then the following hold.
  \begin{enumerate}
    \item $\pv{T}_{k+1} \not\subseteq \pv{V}$ if and only if $\pv{V}$ satisfies a nontrivial $k$-ary product identity.
    \item $\pv{U}_{k+1} \not\subseteq \pv{V}$ if and only if $\pv{V}$ satisfies a (regular) $k$-ary expansion identity.\footnote{For the concepts of \emph{regular} product identities and \emph{expansion} identities, see \cref{def:product-identity}.}
  \end{enumerate}

  In particular, it holds that $\pv{T} \not\subseteq \pv{V}$ (or $\pv{U} \not\subseteq \pv{V}$) if and only if $\pv{V}$ satisfies a nontrivial product identity (or regular expansion identity, resp.) of arbitrary arity.
\end{restatable}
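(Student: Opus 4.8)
Here is the plan I would follow.

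The plan is to witness $\pv T_{k+1}$ and $\pv U_{k+1}$ by concrete finite semigroups and to read off product identities from the images of the two sides of an identity there. Let $S_k$ denote the free semigroup of $\pv T_{k+1}$ on generators $a_1,\dots,a_k$; explicitly, $S_k$ is $0$ together with the nonempty words over $\{a_1,\dots,a_k\}$ with pairwise distinct letters, a product becoming $0$ as soon as a letter would be repeated. Similarly let $U_k$ be the free semigroup of $\pv U_{k+1}$ on $a_1,\dots,a_k$, that is, $0$ together with the nonempty subsets of $\{a_1,\dots,a_k\}$ under disjoint union (and $0$ otherwise). I would first record two basic facts: that $S_k$ generates $\pv T_{k+1}$ and $U_k$ generates $\pv U_{k+1}$ --- the relatively free object on $m$ generators embeds into a power of $S_k$, resp.\ $U_k$, with one coordinate for each $k$-element subset of the generators --- and that $\pv T=\bigvee_{k\geq 1}\pv T_{k+1}$ and $\pv U=\bigvee_{k\geq 1}\pv U_{k+1}$ as directed joins (both pseudovarieties are locally finite, with the $S_k$, resp.\ $U_k$, among their relatively free objects, and every finite member is $m$-generated hence $(m+1)$-nilpotent). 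Granting (1) and (2), the ``in particular'' clause is then immediate, since a directed join $\bigvee_k\pv W_k$ fails to be contained in $\pv V$ precisely when $\pv W_k\not\subseteq\pv V$ for some $k$.

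The heart of the matter is the following remark about the canonical surjection $\pi\colon\widehat F_k\to S_k$, $x_i\mapsto a_i$, from the free profinite semigroup $\widehat F_k$ on $x_1,\dots,x_k$: \emph{if $w\in\widehat F_k$ satisfies $\pi(w)\neq 0$, then $w$ is an ordinary word in which each variable occurs at most once, and $w$ is the only element of $\pi^{-1}(\pi(w))$}. This is a short compactness argument: writing $w$ as a limit of finite words $w_\alpha$, continuity and finiteness of $S_k$ make $\pi(w_\alpha)$ eventually equal to $\pi(w)\neq 0$; but a finite word has nonzero image in $S_k$ only if its letters are distinct, and $\pi$ is injective on such words, so $w_\alpha$ is eventually a fixed distinct-letter word, hence so is $w$. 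The same argument, applied to the analogous surjection $\pi_U\colon\widehat F_k\to U_k$, shows that $w$ is a distinct-letter word whenever $\pi_U(w)\neq 0$, now with letter set (but not order) prescribed by $\pi_U(w)$.

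For (1)($\Leftarrow$): if $\pv V$ satisfies a nontrivial $k$-ary product identity $x_1\dots x_k\approx\rho$, then $\rho\neq x_1\dots x_k$ in $\widehat F_k$, so by the key remark $\pi(\rho)\neq a_1\dots a_k$; thus $S_k$ refutes an identity valid in $\pv V$, so $S_k\notin\pv V$, and as $S_k\in\pv T_{k+1}$ we get $\pv T_{k+1}\not\subseteq\pv V$. For (1)($\Rightarrow$): if $\pv T_{k+1}\not\subseteq\pv V$ then, since $S_k$ generates $\pv T_{k+1}$, $S_k\notin\pv V$, and because $S_k$ is $k$-generated this yields $u,v\in\widehat F_k$ with $\pv V\models u\approx v$ but $\pi(u)\neq\pi(v)$. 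After swapping we may assume $\pi(u)\neq 0$, so by the key remark $u$ is a distinct-letter word; renaming variables, $u=x_1\dots x_r$ with $r\leq k$ (the remaining variables being $x_{r+1},\dots,x_k$) and $v\neq x_1\dots x_r$ (from $\pi(v)\neq\pi(u)$). If $r=k$, then $x_1\dots x_k\approx v$ is already the desired identity. If $r<k$, apply to $u\approx v$ the substitution $\theta$ with $x_1\mapsto x_1x_2\dots x_{k-r+1}$, $x_t\mapsto x_{k-r+t}$ for $2\leq t\leq r$, and $x_j\mapsto x_1$ for $j>r$; then $\theta(x_1\dots x_r)=x_1\dots x_k$, so $\pv V\models x_1\dots x_k\approx\rho$ with $\rho:=\theta(v)$. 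Non-triviality is the one genuinely delicate point: were $\rho=x_1\dots x_k$, then $\pi(\rho)=a_1\dots a_k\neq 0$ would by the key remark force $\theta(v)=x_1\dots x_k$ already in $\widehat F_k$; counting occurrences of $x_2$, then of $x_1$, in $\theta(v)$ shows $v$ must be an ordinary word in $x_1,\dots,x_r$ alone with a single occurrence of $x_1$; and as the $\theta$-images $x_1x_2\dots x_{k-r+1},x_{k-r+2},\dots,x_k$ of $x_1,\dots,x_r$ form a prefix code, $\theta$ is injective on words in these variables, so $\theta(v)=x_1\dots x_k=\theta(x_1\dots x_r)$ gives $v=x_1\dots x_r=u$, a contradiction. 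Hence $\rho\neq x_1\dots x_k$, as required.

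Part (2) runs in parallel with $U_k$ for $S_k$ and ``regular $k$-ary expansion identity'' for ``nontrivial $k$-ary product identity''. For ($\Leftarrow$), the right side of such an identity has full content $\{x_1,\dots,x_k\}$ yet is not a permutation of $x_1\dots x_k$, so by the key remark for $U_k$ (a nonzero image would have to be a distinct-letter word of full content, i.e.\ a permutation) it evaluates to $0\neq a_1\dots a_k$ in $U_k$; thus $U_k\notin\pv V$ and $\pv U_{k+1}\not\subseteq\pv V$. For ($\Rightarrow$), one extracts as before $u,v\in\widehat F_k$ with $\pv V\models u\approx v$ and $\pi_U(u)\neq\pi_U(v)$, with $u$ a distinct-letter word, and substitutes a prefix code turning $u$ into $x_1\dots x_k$; the extra work relative to (1) is to choose this substitution so that $\rho$ also has full content $\{x_1,\dots,x_k\}$ --- treating separately the case where $v$ already omits a variable, where one first passes to a consequence of the form $x_1\dots x_r\approx x_1^{\omega}x_2\dots x_r$ --- and then to check, again via the key remark and unique decodability, that $\rho$ is neither a permutation of $x_1\dots x_k$ nor content-deficient; that every permutation identity holds in $U_k$ is exactly why permutation identities are left out of the notion of an expansion identity. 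I expect the main obstacle throughout to be precisely these non-triviality and regularity verifications in the ($\Rightarrow$) directions, i.e.\ controlling which profinite right-hand sides can survive the arity-fixing substitution.
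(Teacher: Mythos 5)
Your plan is correct, and at its core it is the same strategy as the paper's: both arguments hinge on the $k$-generated free objects $T_k\cong\Omega_k(\pv{T})\in\pv{T}_{k+1}$ (injective words) and $\Omega_k(\pv{U})\in\pv{U}_{k+1}$, together with the observation that a profinite word with nonzero value there must be a finite injective word, after which substitutions massage a failing identity into a product or expansion identity. The differences are in execution. In the forward directions the paper applies Reiterman's theorem to $\pv{T}_{k+1}$ (resp.\ $\pv{U}_{k+1}$) and argues about evaluation in all of its members, so it never needs your generation facts that $S_k$ and $U_k$ generate $\pv{T}_{k+1}$ and $\pv{U}_{k+1}$ (which are true, and your separation-into-powers sketch is fine); you instead invoke the membership criterion for the single $k$-generated witness, which buys you an identity over exactly $k$ variables, and your prefix-code substitution $\theta$ keeps the arity at exactly $k$, whereas the paper settles for arity $n\le k$ (squaring a missing variable, collapsing extra variables to $x_1$) and leaves the padding implicit; note that right restriction by fresh variables also pads to arity $k$ while preserving nontriviality, regularity and the expansion property, so you could dispense with $\theta$ entirely. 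In the backward direction of (1) you use only $T_k$, which is slightly slicker than the paper's route via \cref{lem:regular} and the two witnesses $T_k$ and $\Omega_k(\pv{U})$. Finally, the verifications you defer in (2)($\Rightarrow$) do go through exactly as you indicate: with the same $\theta$ as in (1), counting occurrences of $x_2$, of $x_1$, and of $x_{k-r+2},\dots,x_k$ in $\theta(v)$ shows that $\theta(v)$ can only be an injective word of full content (i.e., a permutation of $x_1\dots x_k$) if $v$ is itself an injective word of content exactly $\{x_1,\dots,x_r\}$, which contradicts $\pi_U(v)\neq\pi_U(u)$; regularity of $\theta(v)$ holds precisely when $c(v)\supseteq\{x_1,\dots,x_r\}$, the remaining case being the one you already split off via a consequence of the form $x_1\dots x_r\approx x_1^{\omega}x_2\dots x_r$; and a regular product identity that is not a permutation identity automatically has right-hand side of length greater than $k$, hence is an expansion identity.
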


Finally, we revisit the aspect of commutativity -- or, more precisely, permutativity. 
By applying the preceding results, we establish the following characterization.

\begin{restatable}{mtheorem}{thmpermutative}\label{thm:permutative}
  Let $\pv{V}$ be a pseudovariety.
  Then $\pv{V}$ is permutative (in the sense that it satisfies a permutation identity) if and only if $\pv{T} \not\subseteq \pv{V}$ and $\pv{V} \subseteq \pv{Perm}$.
\end{restatable}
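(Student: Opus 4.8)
The plan is to prove the two implications separately; the forward one is an immediate consequence of the results above, while the reverse one carries essentially all of the content. For the forward implication, suppose $\pv{V}$ satisfies a permutation identity $x_1 \dots x_m \approx x_{1\sigma} \dots x_{m\sigma}$ with $\sigma \in \mathfrak{S}_m$ nontrivial. Then every member of $\pv{V}$ is permutative, so $\pv{V} \subseteq \pv{Perm}$. Since such an identity is in particular a nontrivial product identity, the final clause of \cref{thm:obstructions} immediately gives $\pv{T} \not\subseteq \pv{V}$.

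For the reverse implication, assume $\pv{T} \not\subseteq \pv{V}$ and $\pv{V} \subseteq \pv{Perm}$. By the final clause of \cref{thm:obstructions}, $\pv{V}$ satisfies some nontrivial product identity $\varepsilon$. Applying \cref{thm:main} to $\varepsilon$: if $\varepsilon$ entails a nontrivial permutation identity, then $\pv{V}$ satisfies that identity and we are done. So we may assume that $\varepsilon$ entails an identity $\eta\colon x_1 \dots x_n \approx x_1 \dots x_{i-1} (x_i \dots x_j)^{\omega+1} x_{j+1} \dots x_n$ with $1 \le i \le j \le n$, and it remains to show that a pseudovariety $\pv{V} \subseteq \pv{Perm}$ satisfying such an $\eta$ must satisfy a common permutation identity.

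The crux, and what I expect to be the main obstacle, is the following \emph{uniform arity bound}: there should be a constant $B = B(n,i,j)$, depending only on the shape of $\eta$, such that every permutative finite semigroup satisfying $\eta$ already satisfies \emph{some} nontrivial permutation identity of arity at most $B$. My plan for this is to combine the hypothesis of almost complete regularity with the structure of permutative semigroups: $\eta$ supplies idempotent loops inside products of bounded length and thereby pins down the structure of such a semigroup up to bounded-length phenomena, essentially reducing matters to the completely regular permutative case, where permutativity is rigid enough to force a fixed short permutation identity. The hard part is making this reduction precise — in effect, folding an a priori long permutation identity satisfied by the semigroup down to arity $B$ by inserting and then collapsing the loops that $\eta$ makes available — and this is where the structural analysis of almost completely regular semigroups developed earlier in the paper is brought to bear.

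Granting the uniform bound, the remainder is soft. Prepending to both sides of a permutation identity a block of fresh variables that the permutation fixes turns an arity-$a$ permutation identity into an arity-$B$ one for any $a \le B$, still with a nontrivial permutation; hence every member of $\pv{V}$ satisfies an identity $x_1 \dots x_B \approx x_{1\tau} \dots x_{B\tau}$ for some nontrivial $\tau \in \mathfrak{S}_B$. For each such $\tau$ let $\pv{V}_\tau$ denote the subpseudovariety of $\pv{V}$ it defines; then $\pv{V} = \bigcup_\tau \pv{V}_\tau$ is a finite union of pseudovarieties. A pseudovariety that is a finite union of pseudovarieties must coincide with one of them: otherwise, choosing $S_\tau \in \pv{V} \setminus \pv{V}_\tau$ for each $\tau$, the finite direct product $\prod_\tau S_\tau$ lies in $\pv{V}$, hence in some $\pv{V}_{\tau_0}$, which, being closed under homomorphic images, would then contain the projection $S_{\tau_0}$ — a contradiction. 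Therefore $\pv{V} = \pv{V}_{\tau_0}$ for some $\tau_0$, i.e.\ $\pv{V}$ satisfies the permutation identity $x_1 \dots x_B \approx x_{1\tau_0} \dots x_{B\tau_0}$, which completes the proof.
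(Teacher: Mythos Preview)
Your forward implication and the initial reduction via \cref{thm:main,thm:obstructions} are fine and match the paper. The gap is exactly where you say it is: the ``uniform arity bound'' is asserted but not proved, and your sketch (``inserting and then collapsing the loops that $\eta$ makes available'') does not yet contain the mechanism that makes this work. The difficulty is that $\eta$ only tells you that certain \emph{products} are completely regular; to commute two arbitrary elements inside a long product you need to know that the completely regular elements interact well with each other, and nothing you have written forces that.

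The paper closes this gap with two short structural lemmas and then derives a \emph{single explicit} permutation identity valid throughout $\pv{V}$, so your finite-union argument is never needed. First, from $\eta$ one gets $\pv{V} \subseteq \pvI{xyz \approx xy^{\omega+1}z} \pvM \pv{N}_k$ for some $k$, so it suffices to treat $\pv{V}' \coloneqq \pv{V} \cap \pvI{xyz \approx xy^{\omega+1}z}$. For $S \in \pv{V}'$ one shows (i) the set $I(S)$ of completely regular elements is a \emph{subsemigroup} (this uses only that $S$ satisfies some nontrivial product identity), and (ii) a completely regular permutative semigroup is already medial, i.e.\ satisfies $xy_1y_2z \approx xy_2y_1z$. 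Combining these with $xyz \approx xy^{\omega+1}z$, which pushes interior factors into $I(S)$, yields
\[
  x_1x_2y_1y_2z_1z_2
  \approx x_1 x_2^{\omega+1} y_1^{\omega+1} y_2^{\omega+1} z_1^{\omega+1} z_2
  \approx x_1 x_2^{\omega+1} y_2^{\omega+1} y_1^{\omega+1} z_1^{\omega+1} z_2
  \approx x_1x_2y_2y_1z_1z_2
\]
in every $S \in \pv{V}'$, and hence a fixed permutation identity in $\pv{V}$ after padding by $\pv{N}_k$. The step you are missing is precisely (i)+(ii): without knowing that $I(S)$ is closed under multiplication and medial, there is no way to ``collapse the loops'' you inserted, because the $\omega$-powers you create need to commute with one another, not merely exist.
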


Here, as usual, $\pv{Perm}$ denotes the pseudovariety of all permutative semigroups, which is \emph{not} a permutative pseudovariety in our sense.

\section{Preliminaries}\label{sec:preliminaries}

The reader is assumed to be familiar with the theory of (finite) semigroups and is referred to the excellent treatments of the subject by Almeida~\cite{Almeida1994}, and Rhodes and Steinberg~\cite{RhodesSteinberg2009} for relevant background material as well as undefined terms.

\medskip

The set of all (nonempty) words over a given alphabet $A$ will be denoted by $A^\ast$ (or $A^{+}$, respectively).
Endowed with the operation of concatenation, $A^+$ is a semigroup.
It is a model of the free $n$-generated semigroup $\Omega_n(\pv{S})$, where $n$ is the cardinality of $A$.
The profinite completion of $A^+$ is the set of all nonempty \emph{profinite words} over $A$.
Given such a profinite word $\rho$, we denote its \emph{length} by $\abs{\rho} \in \mathbb{N} \cup \{\infty\}$ and its \emph{content} (that is, the set of letters appearing in it) by $c(\rho) \subseteq A$.

An \emph{identity}~$\varepsilon$, also called a pseudoidentity in the literature, is a pair of nonempty profinite words~$(\rho_1, \rho_2)$ over a finite alphabet of \emph{variables} $\{x_1, \dots, x_n\}$. 
We usually write the identity $\varepsilon$ as $\varepsilon \colon \rho_1(x_1, \dots, x_n) \approx \rho_2(x_1, \dots, x_n)$.
Such an identity $\varepsilon$ is \emph{finite} if both $\rho_1$ and $\rho_2$ are ordinary words; otherwise $\varepsilon$ is an \emph{infinite} identity.

A finite semigroup $S$ \emph{satisfies} an identity $\varepsilon\colon \rho_1(x_1, \dots, x_n) \approx \rho_2(x_1, \dots, x_n)$, written $S \models \varepsilon$, if the profinite words $\rho_1$ and $\rho_2$ have coincident images under every continuous homomorphism to $S$.
An identity $\varepsilon$ \emph{implies} an identity $\varepsilon'$, written $\varepsilon \models \varepsilon'$, if the implication $S \models \varepsilon \implies S \models \varepsilon'$ holds for all finite semigroups $S$.

\medskip

We use boldface type to denote \emph{pseudovarieties}, i.e., classes of finite semigroups closed under formation of finite direct products, subsemigroups, and homomorphic images.
The pseudovariety of all finite semigroups will be denoted by $\pv{S}$.

A pseudovarity $\pv{V}$ \emph{satisfies} a set of identities $\Delta$, written $\pv{V} \models \Delta$, if every $S \in \pv{V}$ satisfies every $\varepsilon \in \Delta$.
The pseudovariety consisting of all finite semigroups satisfying some set of identities $\Delta$ is denoted by $\pvI{\Delta}$.
To enhance readability we omit brackets when listing identities, and we employ standard abbreviations; that is, we prefer writing, e.g., $\pvI{xy \approx yx, x^2 \approx 0}$ instead of $\pvI{\{ xy \approx yx, ux^2 \approx x^2, x^2u \approx x^2\}}$.

\medskip

As usual, $E(S)$ denotes the set of \emph{idempotents} of a finite semigroup $S$, and we denote the set of its \emph{completely regular} elements by $I(S) \coloneqq \{ s \in S : s = s^{\omega + 1}\}$.

\section{Restricted Identities}\label{sec:restriction}

Our proof of \cref{thm:main} will be by induction on the number of variables appearing in a product identity.
It is based on the following operation on identities.

\begin{definition}\label{def:restriction}
     Let $\varepsilon\colon \rho_1(y_1, \dots, y_n) \approx \rho_2(y_1, \dots, y_n)$ be an identity.
     We obtain the following new identities, which we call the \emph{left} and \emph{right restriction} of $\varepsilon$, respectively, 
     \begin{align*}
       \resL\varepsilon &\colon x \rho_1(y_1, \dots, y_n) \approx x \rho_2(y_1, \dots, y_n),\\
       \resR\varepsilon &\colon \rho_1(y_1, \dots, y_n) z \approx \rho_2(y_1, \dots, y_n) z
     \end{align*}
     where the variables $x$ and $z$ are distinct from $y_1, \dots, y_n$.
\end{definition}

The restricted identities $\resL\varepsilon$ and $\resR\varepsilon$ are similar to the original identity~$\varepsilon$, but they are more general since they only allow for $\varepsilon$ to be applied with a nontrivial left or right context, respectively.\footnote{Naturally, this distinction becomes irrelevant in the context of monoids rather than semigroups.}
For example, restricting $y_1y_2 \approx y_2y_1$ twice on the left and three times on the right yields $x_1x_2y_1y_2z_1z_2z_3 \approx x_1x_2y_2y_1z_1z_2z_3$.

The family of all product identities as well as the two families of identities in \cref{thm:main} are each closed under restrictions (provided that we do not treat the number of variables appearing in them as fixed).
In general, forming left (or right) restrictions gives rise to an operator on the lattice of all pseudovarieties $\mathcal{L}(\pv{S})$.

\begin{proposition}\label{pro:restriction}
  There exists a unique operator ${\ResL} \colon \mathcal{L}(\pv{S}) \to \mathcal{L}(\pv{S})$ such that
  \[
    \ResL \pvI{\Delta} = \pvI{\resL \varepsilon : \varepsilon \in \Delta}
  \]
  for every set of identities $\Delta$.
  Moreover, if $\pv{V'} \subseteq \pv{V}$, then $\ResL\pv{V'} \subseteq \ResL\pv{V}$.
\end{proposition}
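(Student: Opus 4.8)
The plan is to establish the operator $\ResL$ by defining it first on the generating set of pseudovarieties and then extending it, while checking that the definition is independent of the chosen set of identities. Concretely, for an arbitrary pseudovariety $\pv{V}$ I would fix some (any) set of identities $\Delta$ with $\pv{V} = \pvI{\Delta}$ — which exists by Reiterman's theorem — and set $\ResL\pv{V} \coloneqq \pvI{\resL\varepsilon : \varepsilon \in \Delta}$. The content of the proposition is then twofold: (i) well-definedness, i.e.\ this does not depend on the choice of $\Delta$; and (ii) monotonicity. Uniqueness of the operator is immediate from the displayed defining equation, since every pseudovariety is of the form $\pvI{\Delta}$.

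For well-definedness I would prove the following key lemma: for any two sets of identities $\Delta, \Delta'$,
\[
  \pvI{\Delta} \subseteq \pvI{\Delta'} \quad\Longrightarrow\quad \pvI{\resL\varepsilon : \varepsilon \in \Delta} \subseteq \pvI{\resL\varepsilon' : \varepsilon' \in \Delta'}.
\]
Applying this in both directions when $\pvI{\Delta} = \pvI{\Delta'}$ gives independence of the choice, and the lemma simultaneously yields the asserted monotonicity $\pv{V}' \subseteq \pv{V} \implies \ResL\pv{V}' \subseteq \ResL\pv{V}$. To prove the lemma it suffices, by definition of $\pvI{-}$, to show that for each $\varepsilon' \in \Delta'$ the identity $\resL\varepsilon'$ holds in every finite semigroup $S$ satisfying $\{\resL\varepsilon : \varepsilon \in \Delta\}$. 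The natural device here is the monoid completion (or, more elementarily, adjoining an identity element): given such an $S$, form $S^1$ and observe that $S^1 \models \varepsilon$ for every $\varepsilon \in \Delta$, because a continuous homomorphism $\varphi\colon \Omega_n(\pv{S}) \to S^1$ either has image inside $S$, in which case evaluating $\resL\varepsilon$ with the new variable $x$ sent to any element of $S$ forces $\varphi(\rho_1) = \varphi(\rho_2)$, or sends some generator to $1$, which can be absorbed. Hence $S^1 \in \pvI{\Delta} \subseteq \pvI{\Delta'}$, so $S^1 \models \varepsilon'$, and restricting the evaluation back down — sending the context variable $x$ into $S$ — shows $S \models \resL\varepsilon'$. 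One must be slightly careful when $S$ is already a monoid or when profinite words are involved, but continuity of homomorphisms into finite semigroups handles the profinite case, and the monoid case is trivial since then $\resL\varepsilon$ and $\varepsilon$ are equivalent over $S$.

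The main obstacle I anticipate is the passage between $S$ and $S^1$ cleanly in the profinite setting: one needs that satisfaction of $\resL\varepsilon$ by $S$ really does transfer to satisfaction of $\varepsilon$ by $S^1$, and this requires knowing that continuous homomorphisms from the free profinite semigroup into $S^1$ are controlled by where they send generators, together with the fact that $\resL\varepsilon$ is $\varepsilon$ prefixed by a fresh variable so that substituting $1$ for that variable is legitimate. Once this transfer principle is in hand, everything else is bookkeeping: the defining equation is satisfied by construction, uniqueness follows from surjectivity of $\Delta \mapsto \pvI{\Delta}$ onto $\mathcal{L}(\pv{S})$, and monotonicity is the lemma itself. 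The statement and proof for $\ResR$ are entirely symmetric and need not be repeated.
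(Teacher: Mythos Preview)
Your key lemma is correct, and reducing everything to it is the right strategy; however, the proof you sketch for the lemma does not work. The step ``$S \models \{\resL\varepsilon : \varepsilon \in \Delta\}$ implies $S^1 \models \Delta$'' is false. Take $S$ to be the two-element left-zero semigroup (so $st = s$ for all $s,t \in S$) and $\Delta = \{y_1 \approx y_2\}$. Then $S \models xy_1 \approx xy_2$, since $st_1 = s = st_2$ for all $s,t_1,t_2 \in S$; but $S^1$ has three elements and certainly does not satisfy $y_1 \approx y_2$. The problem is precisely that from $s\cdot\varphi(\rho_1) = s\cdot\varphi(\rho_2)$ for all $s \in S$ you cannot cancel to obtain $\varphi(\rho_1) = \varphi(\rho_2)$: finite semigroups are not left-cancellative in general, and adjoining a two-sided identity to $S$ does not let you substitute $x \mapsto 1$ into an identity that is only known to hold in $S$. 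The ``some generator maps to $1$'' case is similarly uncontrolled.

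The paper avoids this by arguing syntactically rather than semantically. One first treats the locally finite case via the completeness theorem for equational logic: a short structural induction on derivations shows that whenever $\varepsilon$ lies in the deductive closure of $\Delta$, the restricted identity $\resL\varepsilon$ lies in the deductive closure of $\{\resL\varepsilon : \varepsilon \in \Delta\}$ (the only nontrivial step is replacement, handled by substituting $x \leftarrow x\sigma$). One then lifts this to equational and finally to arbitrary pseudovarieties by limit arguments, following Costa. If you want to salvage a semantic approach, you would need a construction more delicate than $S^1$ --- something that only adjoins a \emph{left} identity in a way compatible with the semigroup structure --- but no such functorial construction exists in general, which is why the syntactic route is taken.
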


As a preparation for the proof of \cref{pro:restriction}, we first deal with the case of finite identities.
Recall that the \emph{deductive closure} of a set $\Delta \subseteq Y^+ \times Y^+$, which we may think of as a set of finite identities over some set of variables $Y$, is the smallest reflexive, symmetric, and transitive set $D_Y(\Delta) \subseteq Y^+ \times Y^+$ which contains $\Delta$, and which is closed under replacement $(\textup{D}_1)$ and substitution $(\textup{D}_2)$, i.e.:
\begin{enumerate}
  \item[($\textup{D}_1$)] If $\rho_1 \approx \rho_2 \in D_Y(\Delta)$, $\sigma, \tau \in Y^\ast$, then $\sigma \rho_1 \tau \approx \sigma \rho_2 \tau \in D_Y(\Delta)$.
 \item[($\textup{D}_2$)] If $\rho_1 \approx \rho_2 \in D_Y(\Delta)$, $y \in Y$, $\sigma \in Y^+$, then $\rho_1[y \gets \sigma] \approx \rho_2[y \gets \sigma] \in D_Y(\Delta)$.
\end{enumerate}
In the above, we write $\rho[y \gets \sigma]$ for the result of substituting $\sigma \in Y^+$ for $y \in Y$ in $\rho$.

\begin{lemma}\label{lem:restriction-closure}
  Let $\Delta \subseteq Y^+ \times Y^+$ be a set of finite identities, $\Sigma = \{ \resL \varepsilon : \varepsilon \in \Delta\}$ its set of left restrictions with respect to a fixed variable $x \not \in Y$, and $X = \{x\} \cup Y$.
  Then, for every $\varepsilon \colon \rho_1 \approx \rho_2 \in D_Y(\Delta)$, it holds that $\resL \varepsilon \colon x\rho_1 \approx x\rho_2 \in D_X(\Sigma)$.
\end{lemma}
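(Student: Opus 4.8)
The plan is to prove the statement by induction along the inductive construction of the deductive closure $D_Y(\Delta)$. Concretely, I would consider the set
\[
  T \coloneqq \{ (\rho_1, \rho_2) \in Y^+ \times Y^+ : x\rho_1 \approx x\rho_2 \in D_X(\Sigma) \}
\]
and show that $T$, regarded as a subset of $Y^+ \times Y^+$, is reflexive, symmetric, and transitive, is closed under replacement $(\textup{D}_1)$ and substitution $(\textup{D}_2)$, and contains $\Delta$. Since $D_Y(\Delta)$ is by definition the smallest such set, this yields $D_Y(\Delta) \subseteq T$, which is exactly the claim.

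The base case and the closure under the equivalence-relation axioms are immediate, as each of these properties transfers verbatim from $D_X(\Sigma)$ to $T$: for $\varepsilon\colon \rho_1 \approx \rho_2 \in \Delta$ we have $\resL\varepsilon\colon x\rho_1 \approx x\rho_2 \in \Sigma \subseteq D_X(\Sigma)$, so $\Delta \subseteq T$; reflexivity holds because $x\rho \approx x\rho \in D_X(\Sigma)$ for every $\rho \in Y^+$; and symmetry and transitivity are inherited directly. For closure under $(\textup{D}_2)$, suppose $(\rho_1, \rho_2) \in T$, $y \in Y$, and $\sigma \in Y^+$. Applying $(\textup{D}_2)$ inside $D_X(\Sigma)$ to $x\rho_1 \approx x\rho_2$ with the variable $y$ and the word $\sigma$, and noting that $x \ne y$ since $x \notin Y$ (so the leading letter $x$ is untouched and $(x\rho_i)[y \gets \sigma] = x\,\rho_i[y \gets \sigma]$), yields $x\,\rho_1[y \gets \sigma] \approx x\,\rho_2[y \gets \sigma] \in D_X(\Sigma)$, i.e.\ $(\rho_1[y \gets \sigma], \rho_2[y \gets \sigma]) \in T$.

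The only case requiring a small idea — and the step I expect to be the main obstacle — is closure under $(\textup{D}_1)$: given $(\rho_1, \rho_2) \in T$ and $\sigma, \tau \in Y^\ast$, we must exhibit $x\sigma\rho_1\tau \approx x\sigma\rho_2\tau$ in $D_X(\Sigma)$, yet a single application of $(\textup{D}_1)$ to $x\rho_1 \approx x\rho_2$ cannot insert the left context $\sigma$ between the leading $x$ and $\rho_i$. The remedy is to first substitute, via $(\textup{D}_2)$, the letter $x$ by the word $x\sigma \in X^+$; since $x\sigma$ contains no variable of $Y$, this leaves $\rho_1$ and $\rho_2$ unchanged and produces $x\sigma\rho_1 \approx x\sigma\rho_2 \in D_X(\Sigma)$. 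A subsequent application of $(\textup{D}_1)$ with empty left context and right context $\tau$ then gives $x\sigma\rho_1\tau \approx x\sigma\rho_2\tau \in D_X(\Sigma)$, as required. (When $\sigma$ is the empty word this degenerates to a trivial substitution followed by an ordinary replacement.) Having verified all the closure properties, the lemma follows from the minimality of $D_Y(\Delta)$.
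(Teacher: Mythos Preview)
Your argument is correct and is essentially identical to the paper's proof: both proceed by structural induction on the deductive closure, and the key step in the replacement case is exactly the substitution $x \gets x\sigma$ followed by replacement with suffix $\tau$. One small slip: in that step you write ``since $x\sigma$ contains no variable of $Y$,'' which is false when $\sigma \in Y^+$; the correct reason that $\rho_1,\rho_2$ are unaffected by the substitution $[x \gets x\sigma]$ is that $\rho_1,\rho_2 \in Y^+$ contain no occurrence of $x$.
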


\begin{proof}
  We prove the assertion by structural induction on $\varepsilon \in D_Y(\Delta)$.
  If $\varepsilon \in \Delta$, then $\resL \varepsilon \in \Sigma$ and, hence, $\resL\varepsilon \in D_X(\Sigma)$.
  The induction step is trivial if membership of $\varepsilon$ in $D_Y(\Delta)$ is grounded in reflexivity, symmetry, transitivity, or a substitution.

  Finally, suppose that $\varepsilon \colon \sigma\rho_1\tau \approx \sigma\rho_2\tau$ is obtained by replacement, i.e., $\sigma, \tau \in Y^\ast$ and $\rho_1 \approx \rho_2 \in D_Y(\Delta)$ with $x\rho_1 \approx x\rho_2 \in D_X(\Sigma)$.
  Substituting $x \gets x\sigma$ in the latter identity yields $x\sigma \rho_1 \approx x\sigma \rho_2 \in D_X(\Sigma)$ and, hence, $\resL \varepsilon \colon x\sigma \rho_1 \tau \approx x\sigma \rho_2 \tau \in D_X(\Sigma)$ by replacement with an empty prefix and with suffix $\tau$.
\end{proof}

\begin{proof}[Proof of \cref{pro:restriction}]
  The uniqueness part of the assertion is a direct consequence of Reiterman's theorem \cite[Theorem~3.1]{Reiterman1982}, which tells us that every pseudovariety is defined by some set of identities.
  Similarly, the addendum follows easily from Reiterman's theorem.
  To show existence, we adapt a three-step argument devised by Costa~\cite[Proposition~3.1]{Costa2002} and discuss the relevant modifications to it below.

  In each step, one considers a pseudovarity $\pv{V}$ and a basis of identities $\Delta$ thereof, both subject to assumptions.
  Let $\Sigma = \{ \resL \varepsilon : \varepsilon \in \Delta\}$ and $\Sigma' = \{ \resL \varepsilon : \varepsilon \in \Delta'\}$ where $\Delta'$ is the set of all identities satisfied by $\pv{V}$.
  One then proves that $\pvI{\Sigma} = \pvI{\Sigma'}$.
  The inclusion $\pvI{\Sigma} \supseteq \pvI{\Sigma'}$ holds trivially, as $\Delta \subseteq \Delta'$ and, hence, $\Sigma \subseteq \Sigma'$.
  Therefore, it suffices to show that $\pvI{\Sigma} \subseteq \pvI{\Sigma'}$ or, equivalently, that $\Sigma \models \resL \varepsilon$ for every $\varepsilon \in \Delta'$.

  In the first step, we assume that $\pv{V}$ is \emph{locally finite}\footnote{Recall that $\pv{V}$ is locally finite if the free $n$-generated $\pv{V}$-semigroup $\Omega_n(\pv{V})$ is finite for all finite~$n$. 
   Such a pseudovariety is necessarily equational, i.e., it admits a basis of finite identities.} and that $\Delta$ is a basis of \emph{finite} identities.
  If, moreover, $\varepsilon$ is a finite identity (over some common set of variables~$Y$), then the identity $\varepsilon$ is contained in the deductive closure of $D_Y(\Delta)$ by completeness of equational logic \cite[Theorem~10]{Birkhoff1935}.
  Hence, $\resL\varepsilon$ is contained in the deductive closure of $\Sigma$ by \cref{lem:restriction-closure}, and this shows that $\Sigma \models \resL \varepsilon$.
  Using a suitable limit argument (enabled by the assumption that $\pv{V}$ is locally finite) shows that the latter in fact holds for every (not necessarily finite) identity $\varepsilon \in \Delta'$; see~\cite[Proposition~3.1]{Costa2002}.

  In the second step one assumes that $\pv{V}$ is \emph{equational} and that $\Delta$ is a basis of \emph{finite} identities.
  In the third step $\pv{V}$ and $\Delta$ can finally be \emph{arbitrary}.
  Either way, the case at hand reduces to the one handled in the preceding step.
  We omit the details since these reductions are identical to those presented by Costa~\cite[Proposition~3.1]{Costa2002}.
\end{proof}

\section{Permutation and Expansion Identities}\label{sec:auxiliary}

We now proceed to examine product identities, with particular attention to expansion and permutation identities. 
We begin with formal definitions.

\begin{definition}\label{def:product-identity}
  An \emph{$n$-ary product identity} $\varepsilon$ is an identity of the form
  \[
   \varepsilon \colon x_1 \dots x_n \approx \rho(x_1, \dots, x_n)
  \]
  where $x_1, \dotsc, x_n$ are distinct variables.
  We say that $\varepsilon$ is \emph{regular} if each variable~$x_i$ appears in the right-hand side of the identity, i.e., $c(\rho) = \{ x_1, \dots, x_n \}$.
  Moreover, we call the identity $\varepsilon$ an \emph{expansion identity} if its right-hand side has length $\abs{\rho} > n$.
\end{definition}

The right-hand side of a regular $n$-ary product identity $\varepsilon \colon x_1 \dots x_n \approx \rho(x_1, \dots, x_n)$ satisfies $\abs{\rho} \geq n$, and equality holds if and only if $\varepsilon$ is of the form
\[
  \varepsilon \colon x_1 \dotsc x_n \approx x_{1\sigma} \dotsc x_{n\sigma}
\]
for some permutation $\sigma \in \mathfrak{S}_n$ of the symbols $1, \dotsc, n$.
Therefore, every nontrivial regular product identity is either an expansion identity or a permutation identity.

\begin{lemma}\label{lem:regular}
  Let $\varepsilon \colon x_1 \dots x_n \approx \rho(x_1, \dots, x_n)$ be a nontrivial $n$-ary product identity.
  If $\varepsilon$ is not regular, then $\varepsilon \models \varepsilon'$ for some regular $n$-ary expansion identity $\varepsilon'$.
\end{lemma}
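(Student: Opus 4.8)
The plan is to obtain the required identity $\varepsilon'$ from a single, carefully chosen substitution into $\varepsilon$, exploiting the very hypothesis that $\varepsilon$ is \emph{not} regular: the missing variable can be substituted freely without disturbing the right-hand side $\rho$.

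First I would record a harmless reduction. Since $\varepsilon$ is not regular, fix an index $k \in \{1, \dots, n\}$ with $x_k \notin c(\rho)$. As the content of a nonempty profinite word is nonempty, the case $n = 1$ would force $c(\rho) = \{x_1\}$, making $\varepsilon$ regular; hence $n \geq 2$, which is exactly what will make the identity we construct a genuine expansion identity.

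The key step is to substitute $x_k \gets x_1 \dots x_n$ in $\varepsilon$. Because $x_k$ does not occur in $\rho$, the right-hand side is unaffected, while the left-hand side becomes the word
\[
  w \coloneqq x_1 \dots x_{k-1}\,(x_1 \dots x_n)\,x_{k+1} \dots x_n,
\]
so that $\varepsilon \models w \approx \rho$. Combining this with $\varepsilon$ itself by symmetry and transitivity yields $\varepsilon \models x_1 \dots x_n \approx w$. It then remains to observe that $\varepsilon' \colon x_1 \dots x_n \approx w$ is a regular $n$-ary expansion identity: every variable $x_1, \dots, x_n$ occurs in the displayed middle factor $x_1 \dots x_n$ of $w$, so $\varepsilon'$ is regular, and $\abs{w} = 2n - 1 > n$ since $n \geq 2$, so $\varepsilon'$ is an expansion identity. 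Setting $\varepsilon' \coloneqq (x_1 \dots x_n \approx w)$ finishes the argument.

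I do not expect a genuine obstacle here; the only point that requires a moment's thought is the choice of the substitution $x_k \gets x_1 \dots x_n$, which is tailored precisely so that the non-occurring variable $x_k$ renders the right-hand side inert while enlarging the left-hand side. The remaining ingredients — that $\models$ is stable under substitution and transitive, also for (pseudo)identities whose sides are profinite words, and that the content of a nonempty profinite word is nonempty — are entirely routine and may be invoked without further comment.
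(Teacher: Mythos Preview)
Your proof is correct and follows essentially the same approach as the paper: exploit a variable $x_k \notin c(\rho)$ by substituting for it, leaving $\rho$ unchanged while lengthening the left-hand side, then conclude by transitivity. The only cosmetic difference is that the paper substitutes $x_i \gets x_i^2$ (yielding the slightly simpler right-hand side $x_1 \dots x_{i-1} x_i^2 x_{i+1} \dots x_n$ of length $n+1$) rather than $x_k \gets x_1 \dots x_n$.
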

\begin{proof}
  If $\varepsilon$ is already regular, then there is nothing to prove.
  Otherwise, choose any variable $x_i$ such that $x_i \not\in c(\rho(x_1, \dots, x_n))$.
  Then $\varepsilon$ implies
  \begin{align*}
    \varepsilon' \colon x_1 \dots x_n 
    &\approx \rho(x_1, \dots, x_{i-1}, x_i, x_{i+1}, \dots, x_n) \\
    &= \rho(x_1, \dots, x_{i-1}, x_i^2, x_{i+1}, \dots, x_n)
    \approx x_1 \dots x_{i-1} x_i^2 x_{i+1} \dots x_n.
    \qedhere
  \end{align*}
\end{proof}

In proving \cref{thm:main}, the preceding lemma permits us to restrict attention to regular expansion identities. 
The following elementary observation allows us to further confine the analysis to those that are infinite. 
While the remainder of the argument will be carried out pointwise, which ultimately returns us to the setting of finite identities, the initial reduction to infinite identities enables us to assume that the lengths of the corresponding right-hand sides are arbitrarily large.

\begin{lemma}\label{lem:infinite}
  Let $\varepsilon \colon x_1 \dots x_n \approx \rho(x_1, \dots, x_n)$ be a (regular) expansion identity.
  Then there exists an infinite (regular) $n$-ary expansion identity $\varepsilon'$ with $\varepsilon \models \varepsilon'$.
\end{lemma}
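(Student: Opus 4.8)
The plan is to dispose of a trivial case and then construct the infinite right-hand side as the limit of a sequence of progressively longer \emph{finite} right-hand sides, each obtained from its predecessor by a single substitution instance of $\varepsilon$. If $\abs{\rho} = \infty$, then $\varepsilon$ is already an infinite (and, where relevant, regular) $n$-ary expansion identity and there is nothing to prove; so I may assume that $d \coloneqq \abs{\rho} - n$ is a positive integer, i.e.\ that $\rho$ is an ordinary word of length $n + d \geq n + 1$. I fix once and for all an index $j$ with $x_j \in c(\rho)$, which is possible since $\rho$ is nonempty.

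Next I would construct ordinary words $\rho = \rho^{(1)}, \rho^{(2)}, \ldots$ over $\{x_1, \ldots, x_n\}$ such that, for every $k \geq 1$: (i) $\varepsilon \models x_1 \ldots x_n \approx \rho^{(k)}$; (ii) $\abs{\rho^{(k+1)}} \geq \abs{\rho^{(k)}} + d$; and (iii) $c(\rho^{(k+1)}) \subseteq c(\rho^{(k)})$, with equality whenever $\rho$ is regular. Given $\rho^{(k)}$, whose length is $\geq \abs{\rho} \geq n+1$ by (ii), factor it as $\rho^{(k)} = w_1 \ldots w_n$ into nonempty words with $\abs{w_j} = \abs{\rho^{(k)}} - n + 1$ (so $\abs{w_j} \geq 2$) and $\abs{w_i} = 1$ for $i \ne j$, and set $\rho^{(k+1)} \coloneqq \rho(w_1, \ldots, w_n)$. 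Since $\varepsilon$ implies each of its substitution instances, substituting $x_i \mapsto w_i$ gives $\varepsilon \models \rho^{(k)} = w_1 \ldots w_n \approx \rho(w_1, \ldots, w_n) = \rho^{(k+1)}$, and together with (i) for $k$ and transitivity this yields (i) for $k+1$. Writing $m_i$ for the number of occurrences of $x_i$ in $\rho$ (so $\sum_i m_i = \abs{\rho}$ and $m_j \geq 1$), a short computation gives $\abs{\rho^{(k+1)}} - \abs{\rho^{(k)}} = \sum_i (m_i - 1)\abs{w_i} = (\abs{\rho} - \abs{\rho^{(k)}}) + m_j(\abs{\rho^{(k)}} - n) \geq \abs{\rho} - n = d$, which is (ii); and (iii) holds because $c(\rho(w_1, \ldots, w_n)) = \bigcup_{i : x_i \in c(\rho)} c(w_i) \subseteq \bigcup_i c(w_i) = c(\rho^{(k)})$, with equality when $c(\rho) = \{x_1, \ldots, x_n\}$.

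It remains to pass to the limit. By (ii), $\abs{\rho^{(k)}} \to \infty$. The words $\rho^{(k)}$ all lie in the compact space of profinite words over the finite alphabet $\{x_1, \ldots, x_n\}$, so some subsequence converges, say $\rho^{(k_l)} \to \rho'$. Over a finite alphabet there are only finitely many words of length at most $L$, so the set of profinite words of length $\leq L$ is closed; since $\abs{\rho^{(k_l)}}$ eventually exceeds any prescribed $L$, we get $\abs{\rho'} = \infty$, so $\rho'$ is in particular a nonempty profinite word, and by continuity of the content map together with (iii), $c(\rho') = \{x_1, \ldots, x_n\}$ whenever $\rho$ is regular. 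Finally, for any finite semigroup $S \models \varepsilon$ and any continuous homomorphism $\varphi$ into $S$, property (i) gives $\varphi(x_1 \ldots x_n) = \varphi(\rho^{(k_l)})$ for all $l$, and letting $l \to \infty$ and using continuity of $\varphi$ yields $\varphi(x_1 \ldots x_n) = \varphi(\rho')$. Hence $\varepsilon \models \varepsilon'$ for $\varepsilon' \colon x_1 \ldots x_n \approx \rho'$, which is the desired infinite (regular) $n$-ary expansion identity.

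The only point that requires care is the growth estimate (ii): the long factor $w_j$ must be substituted for a variable that actually occurs in $\rho$, since for $m_j = 0$ the length need not increase — this is exactly why $j$ is chosen with $x_j \in c(\rho)$ at the outset. Everything else is a routine compactness-and-continuity argument, with the trivial reduction at the start being what lets us assume that $\rho$ is an honest finite word of length greater than $n$.
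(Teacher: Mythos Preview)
Your proof is correct and follows essentially the same strategy as the paper: iterate $\varepsilon$ to produce a sequence of finite right-hand sides of unbounded length, then pass to an accumulation point using sequential compactness and continuity of length and content. The only difference is in the iteration step: the paper replaces the length-$n$ prefix $y_1\ldots y_n$ of $\rho_i$ by $\rho(y_1,\ldots,y_n)$, obtaining an exact growth of $|\rho|-n$ per step without any computation, whereas you factor $\rho^{(k)}$ into $n$ blocks (one long, the rest singletons) and substitute into $\rho$, which requires the short calculation you give to verify growth. Both variants work; the paper's prefix replacement is slightly more economical, while your version makes the role of the chosen index $j$ with $x_j\in c(\rho)$ explicit.
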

\begin{proof}
  If $\varepsilon$ is already infinite, then there is nothing to prove.
  Otherwise, consider the sequence of words $\rho_0, \rho_1, \dots$ where $\rho_0 = x_1 \dots x_n$ and $\rho_{i+1}$ is obtained from $\rho_i$ by replacing its prefix $y_1 \dots y_n$ of length $n$ by $\rho(y_1, \dots, y_n)$.
  If $\varepsilon$ is regular, then we have $c(\rho_i) = \{x_1, \dots, x_n\}$ for all $i \geq 0$.
  Moreover, as $\abs{\rho} = n + k$ for some $k > 0$, we have that $\abs{\rho_i} = n + ik \to \infty$ for $i \to \infty$.
  Furthermore, $\varepsilon \models \rho_0 \approx \rho_i$ for all $i \geq 0$.

  By sequential compactness of the space of profinite words over $\{x_1, \dots, x_n\}$, the sequence $\rho_0, \rho_1, \dots$ has an accumulation point $\rho'$. 
  Since content and length are continuous, and the set of identities implied by any fixed set of identities is closed, the identity $\varepsilon' \colon x_1 \dots x_n \approx \rho'(x_1, \dots, x_n)$ has the desired properties.
\end{proof}

The following proposition summarizes the results of the above inquiry.

\begin{proposition}\label{pro:exp-or-per}
  Let $\varepsilon \colon x_1 \dots x_n \approx \rho(x_1, \dots, x_n)$ be a nontrivial product identity.
  Then precisely one of the following alternatives holds.
  \begin{enumerate}
    \item There exists an infinite regular $n$-ary expansion identity $\varepsilon'$ with $\varepsilon \models \varepsilon'$.
    \item The identity $\varepsilon$ is a permutation identity.
  \end{enumerate}
\end{proposition}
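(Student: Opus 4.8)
The plan is to derive the dichotomy by composing the two reduction lemmas with the syntactic trichotomy recorded after \cref{def:product-identity}, and to establish disjointness of the two alternatives by exhibiting a single finite commutative model. Thus the bulk of the argument is bookkeeping with the implication relation $\models$, and the only genuinely new ingredient is needed for the word ``precisely''.

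For the ``at least one'' half I would split on whether $\varepsilon$ is regular. If $\varepsilon$ is not regular, then \cref{lem:regular} (whose hypothesis, $\varepsilon$ nontrivial, is granted) supplies a regular $n$-ary expansion identity $\varepsilon'$ with $\varepsilon \models \varepsilon'$; applying \cref{lem:infinite} to $\varepsilon'$ and using transitivity of $\models$ yields an infinite regular $n$-ary expansion identity implied by $\varepsilon$, so alternative~(1) holds. If instead $\varepsilon$ is regular, then—since $\varepsilon$ is nontrivial—the observation following \cref{def:product-identity} says that $\varepsilon$ is either an $n$-ary expansion identity or a permutation identity; in the former case \cref{lem:infinite} again produces an infinite regular $n$-ary expansion identity implied by $\varepsilon$ and alternative~(1) holds, while in the latter case alternative~(2) holds directly. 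This also correctly covers $n = 1$, where $\varepsilon$ is always a (regular) expansion identity.

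For the ``at most one'' half, suppose $\varepsilon\colon x_1 \dots x_n \approx x_{1\sigma}\dots x_{n\sigma}$ is a permutation identity (so $n \geq 2$) and, aiming for a contradiction, that $\varepsilon$ also implies some infinite regular $n$-ary expansion identity $\varepsilon'\colon x_1 \dots x_n \approx \rho'$. I would take $S = \Omega_n(\pv{U})$, the free $n$-generated semigroup in $\pv{U} = \pvI{xy \approx yx, x^2 \approx 0}$; concretely its elements are the square-free commutative monomials in $x_1, \dots, x_n$ together with a zero, so $S$ is finite, commutative, and nilpotent. Being commutative, $S$ satisfies every permutation identity, in particular $\varepsilon$, hence also $\varepsilon'$ by assumption. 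But under the continuous homomorphism $\varphi$ fixing the generators, $\varphi(x_1\dots x_n)$ is the ``top'' monomial, which is nonzero, whereas $\varphi(\rho') = 0$: since $\abs{\rho'} = \infty$ and length is continuous, $\rho'$ is a limit of words of length $> n$, each of which repeats some letter (pigeonhole) and is therefore sent to the absorbing zero by $\varphi$, so continuity forces $\varphi(\rho') = 0$ (as $\{0\}$ is closed in $S$). This contradicts $S \models \varepsilon'$, so the two alternatives cannot both occur.

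I expect the only step beyond routine composition of implications to be this disjointness argument, and inside it the one point to handle with care is the evaluation $\varphi(\rho') = 0$ of the profinite right-hand side; it rests on $S$ being nilpotent together with continuity of $\varphi$ and of the length function, in the same spirit as the limit argument already used in the proof of \cref{lem:infinite}. Everything else amounts to threading $\varepsilon \models \varepsilon' \models \varepsilon''$ through \cref{lem:regular,lem:infinite}.
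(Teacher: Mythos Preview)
Your proof is correct and follows the same two-part outline as the paper: compose \cref{lem:regular} and \cref{lem:infinite} via the trichotomy after \cref{def:product-identity} for the ``at least one'' half, then exhibit a finite commutative witness for mutual exclusivity. The only real difference is the choice of witness for disjointness. The paper uses the monogenic semigroup $C_{n+1,1} = \langle a : a^{n+1} = a^{n+2}\rangle$: it is commutative (hence satisfies every permutation identity), yet substituting $a$ for every variable in a $k$-ary expansion identity with $k \le n$ yields $a^k$ on the left and a strictly higher power of $a$ on the right, which differ. Your witness $\Omega_n(\pv{U})$ works equally well and in fact anticipates the role this semigroup plays later in \cref{thm:obstructions}; the paper's choice is slightly more elementary in that it avoids describing a free object, but both arguments ultimately need the same continuity step (evaluating an infinite right-hand side via a limit) that you spell out and the paper leaves implicit.
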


\begin{proof}
  If $\varepsilon$ is not a permutation identity, then it implies a regular $n$-ary expanions identity $\varepsilon'$ by \cref{lem:regular}, and the latter can be assumed infinite by \cref{lem:infinite}.

  To see that the alternatives are indeed mutually exclusive, simply note that the monoigenic semigroup $C_{n+1,1} = \langle a : a^{n+1} = a^{n+2} \rangle$ satisfies every permutation identity, but does not satisfy a $k$-ary expansion identity for any $k \leq n$.
\end{proof}

The remainder of this section examines how satisfying an expansion identity affects the structure of a finite semigroup, focusing in particular on its idempotent and completely regular elements.

\begin{lemma}\label{lem:nil-extension}
  Let $\varepsilon \colon x_1 \dots x_n \approx \rho(x_1, \dots, x_n)$ be an expansion identity.
  Furthermore, let $S$ is a finite semigroup with $S \models \varepsilon$, and let $E = E(S)$.
  Then $S^n = SES$.
\end{lemma}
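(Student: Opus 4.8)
I would first dispose of the inclusion $SES \subseteq S^n$, which needs no hypothesis on $\varepsilon$: for $n = 1$ it is immediate from $SES \subseteq S$, and for $n \geq 2$ any element $set \in SES$ can be written as the product $s \cdot e \cdots e \cdot (et)$ of $n$ elements of $S$, with $n-2$ of the factors equal to $e$, using $e^{n-1} = e$. So the real content is the reverse inclusion $S^n \subseteq SES$.

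For that, I would fix a tuple $(s_1, \dots, s_n) \in S^n$ and the continuous homomorphism $\varphi$ to $S$ with $x_i \mapsto s_i$. By \cref{lem:infinite}, $\varepsilon$ implies an infinite $n$-ary expansion identity $\varepsilon' \colon x_1 \dots x_n \approx \rho'$; since $S \models \varepsilon$, also $S \models \varepsilon'$, so $s_1 \dots s_n = \varphi(x_1 \dots x_n) = \varphi(\rho')$. Because $\rho'$ has infinite length, it is a limit of finite words $w_k$ over $\{x_1, \dots, x_n\}$ with $\abs{w_k} \to \infty$, and since $S$ is finite and discrete the sequence $\varphi(w_k)$ is eventually equal to $\varphi(\rho')$. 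Hence I can fix a single finite word $w$ with $\varphi(w) = s_1 \dots s_n$ and $\abs{w} > \abs{S}$. The task then reduces to showing that any product $t_1 \dots t_m$ of $m > \abs{S}$ elements of $S$ lies in $SES$.

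To prove this last statement I would apply the pigeonhole principle to the $m$ partial products $p_k \coloneqq t_1 \dots t_k$ ($1 \leq k \leq m$): since $m > \abs{S}$, there are indices $1 \leq i < j \leq m$ with $p_i = p_j$. Put $u \coloneqq t_1 \dots t_i$ and $v \coloneqq t_{i+1} \dots t_j$; then $u = uv$, hence $u = u v^k$ for all $k \geq 1$ and in particular $u = uf$ for the idempotent $f \coloneqq v^\omega \in E(S)$. Since $t_1 \dots t_m = uv\, t_{j+1} \dots t_m = u\, t_{j+1} \dots t_m$, we obtain
\[
  t_1 \dots t_m = (uf) \cdot f \cdot (f\, t_{j+1} \dots t_m) \in SES,
\]
where the last factor is read as $f$ when $j = m$. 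This completes the argument.

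I do not expect a genuine obstacle here. The one structural point worth emphasising is that invoking \cref{lem:infinite} at the very start is precisely what removes any dependence between $n$ and $\abs{S}$ in the length bound $\abs{w} > \abs{S}$; without it, a product identity with $n$ large but right-hand side only slightly longer than $n$ would be awkward to treat directly. The only detail that needs a moment of care is the passage from the profinite word $\rho'$ to a genuine long finite word with the same image under $\varphi$, which rests on the finiteness of $S$ and the continuity of $\varphi$, together with the fact that an infinite profinite word is a limit of finite words of unbounded length.
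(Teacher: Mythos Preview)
Your argument is correct and follows essentially the same route as the paper's proof: invoke \cref{lem:infinite} to replace $\varepsilon$ by an identity whose right-hand side has length at least $\abs{S}$, conclude $S^n \subseteq S^{\abs{S}}$, and then use the pigeonhole argument on partial products to obtain $S^{\abs{S}} \subseteq SES$. The paper is simply terser---it cites the pigeonhole step as \cite[Proposition~3.7.1]{Almeida1994} and absorbs the profinite-to-finite approximation into the phrase ``we may assume that $\varepsilon$ is finite and $\abs{\rho} \geq \abs{S}$''---whereas you spell both of these out explicitly.
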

\begin{proof}
  We may assume that $\varepsilon$ is finite and, by \cref{lem:infinite}, that $\abs{\rho} \geq m$ where $m = \abs{S}$.
  Then the inclusions $S^n \subseteq S^m \subseteq SES$ hold, the first being due to $\varepsilon$, and the second being a standard application of the pigeonhole principle \cite[Proposition~3.7.1]{Almeida1994}.

  The reverse inclusion $SES \subseteq S^n$ trivially holds in every semigroup.
\end{proof}

For the expansion identities of the following type, we can furthermore relate the set of idempotents $E(S)$ with the set of completely regular elements $I(S)$.

\begin{definition}
  Let $\varepsilon \colon x_1 \dots x_n \approx \rho(x_1, \dots, x_n)$. 
  We call $\varepsilon$ \emph{left-} or \emph{right-primitive} if $\rho(x_1, \dots, x_n)$ cannot be decomposed as $x_1 \rho'(x_2, \dots, x_n)$ or $\rho'(x_1, \dots, x_{n-1}) x_n$, respectively.
  The identity $\varepsilon$ is called \emph{primitive} if it is left- and right-primitive.
\end{definition}

Note that an $n$-ary product identity is thus left- or right-primitive if, and only if, it is \emph{not} a left or right restriction, respectively, of some $(n-1)$-ary product identity.

\begin{lemma}\label{lem:ideal}
  Let $\varepsilon \colon x_1 \dots x_n \approx \rho(x_1, \dots, x_n)$ be an expansion identity.
  Furthermore, let $S$ be a finite semigroup with $S \models \varepsilon$, and let $E = E(S)$ and $I = I(S)$.
  \begin{enumerate}
    \item\label{lem:ideal-itm1} If $\varepsilon$ is left-primitive, then $SE = I$ and $SI = I$.
    \item\label{lem:ideal-itm2} If $\varepsilon$ is primitive, then $S^n = SES = I$.
  \end{enumerate}
\end{lemma}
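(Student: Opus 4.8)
The plan is to reduce both items to the elementary observation that a finite semigroup satisfying $x \approx x^{m}$ for some $m \ge 2$ is a union of groups, and then to derive $(2)$ from $(1)$ by left--right duality together with \cref{lem:nil-extension}.

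For $(1)$ I would first record the trivial containments $I \subseteq SE \subseteq SI$: every $t \in I$ equals $t \cdot t^{\omega}$ with $t^{\omega} \in E$, and $E \subseteq I$. Thus it suffices to prove $SI \subseteq I$. (The case $n = 1$ is degenerate and should be disposed of first: an identity $x_{1} \approx \rho$ with $\abs{\rho} > 1$ already forces $S$ to be completely regular, since a non-completely-regular element would generate a monogenic subsemigroup violating it, and then $S = I = SE = SI = SES = S^{1}$.) Assume henceforth that $\varepsilon$ is left-primitive with $n \ge 2$, so that $\rho$ begins with some $x_{j}$, $j \ne 1$. The decisive step — and the one place where left-primitivity genuinely enters — is to substitute $x_{1} \mapsto s$ and $x_{2}, \dots, x_{n} \mapsto e$ into $\varepsilon$, for arbitrary $s \in S$ and $e \in E$: the left-hand side evaluates to $se$, while the right-hand side evaluates to a product over $\{s, e\}$ that begins with $e$ (the image of $x_{j}$), so prepending $e$ leaves it unchanged. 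This yields the relation
\[
  (\star)\qquad e \cdot se = se \qquad (s \in S,\ e \in E).
\]
I expect $(\star)$ to be the main obstacle: for a non-left-primitive identity the right-hand side could begin with $s$ instead, and the relation would fail; everything after $(\star)$ is routine semigroup arithmetic.

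Granting $(\star)$, I would complete $(1)$ thus. Fix $s \in S$ and $t \in I$, put $u = st$ and $g = t^{\omega} \in E$. From $gt = tg = t$ we obtain both $ug = u$ and $u = (sg)t$, so $(\star)$ applied with $e = g$ gives $gu = (g \cdot sg)t = (sg)t = u$; hence $g$ is a two-sided identity for $u$. Now substitute into $\varepsilon$ the value $u$ for some variable $x_{k}$ that occurs at least twice in $\rho$ — one exists since the occurrence counts of the $x_{i}$ in $\rho$ sum to $\abs{\rho} > n$ — and the value $g$ for every other variable. As $g^{2} = g$ and $gu = ug = u$, the left-hand side collapses to $u$ and the right-hand side to $u^{m_{k}}$, where $m_{k} \ge 2$ counts the occurrences of $x_{k}$ in $\rho$. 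Thus $u = u^{m_{k}}$ with $m_{k} \ge 2$, so $u$ lies in a subgroup of $S$, i.e.\ $u \in I$. This gives $SI \subseteq I$, and combined with $I \subseteq SE \subseteq SI$ it yields $SE = SI = I$.

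For $(2)$, with $\varepsilon$ primitive, \cref{lem:nil-extension} gives $S^{n} = SES$. Applying $(1)$ to $S$ gives $SI = I$, and applying $(1)$ to the opposite semigroup $S^{\mathrm{op}}$ together with the reversed identity $\bar{\varepsilon}$ — which is left-primitive precisely because $\varepsilon$ is right-primitive, and which $S^{\mathrm{op}}$ satisfies — gives $ES = I$. Hence $SES = S(ES) = SI = I$, so $S^{n} = SES = I$. The remaining things to check carefully are the bookkeeping in the collapsing step (including the innocuous case where the chosen variable is absent from $\rho$) and that passing to $S^{\mathrm{op}}$ indeed interchanges the two primitivity notions.
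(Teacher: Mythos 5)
There is a genuine gap, and it sits exactly at the step you yourself flagged as decisive. You read ``left-primitive'' as ``$\rho$ begins with some $x_j$, $j \neq 1$'', but the definition only forbids a decomposition $\rho = x_1\rho'(x_2,\dots,x_n)$, i.e.\ one in which $x_1$ does \emph{not} recur in $\rho'$. A left-primitive $\rho$ may well begin with $x_1$, provided $x_1$ occurs at least twice; for instance $\varepsilon\colon x_1x_2 \approx x_1x_2x_1$ is a left-primitive expansion identity. In that case your substitution $x_1 \mapsto s$, $x_i \mapsto e$ ($i \geq 2$) yields a right-hand side beginning with $s$, so $(\star)$ cannot be extracted -- and it is in fact false: the two-element left-zero semigroup $\{s,e\}$ (product $xy = x$, all elements idempotent) satisfies $x_1x_2 \approx x_1x_2x_1$, yet $e(se) = e \neq s = se$. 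So the missing case is not ``routine semigroup arithmetic'' downstream of $(\star)$; it needs its own argument. The paper supplies it: if $\rho$ begins with $x_1$, left-primitivity forces $x_1$ to occur $k+1 \geq 2$ times, and substituting $x_1 \mapsto se$, $x_i \mapsto e$ collapses the identity to $se = (se)^{k+1}$, giving $se \in I$ directly; only when the first letter of $\rho$ is some $x_i$ with $i \neq 1$ does it argue as you do, first obtaining $ese = se$ and then $se = (se)^{k+1}$ by substituting $ese$ for a variable occurring at least twice.

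Apart from this, your architecture is sound and close to the paper's: in the case you do treat, proving $SI \subseteq I$ via the local identity $g = t^{\omega}$ is a harmless repackaging of the paper's proof that $SE \subseteq I$; the containments $I \subseteq SE \subseteq SI$, the pigeonhole choice of a variable occurring twice, and the dualization to $S^{\mathrm{op}}$ combined with \cref{lem:nil-extension} for item \ref{lem:ideal-itm2} all match the paper. One further point to make explicit: $\rho$ is only a profinite word, so ``occurrence counts'' and the collapsing computations need justification. The paper first reduces to a finite identity (the conditions prescribing the first letter of $\rho$ and which variables occur zero times, once, or more than once cut out a closed set, and $S$ is finite and discrete), and you should perform the same reduction before counting occurrences of $x_k$.
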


Putcha and Weissglass previously established a variant of this result under an assumption slightly less general than our notion of primitivity; see \cite[Lemma~1.6]{PutchaWeissglass1972}.

\begin{proof}
  \textit{Ad \ref{lem:ideal-itm1}.}
  As above, we may assume that $\varepsilon$ is finite.\footnote{Note that specifying the first and last letter of profinite words, and which letters are to appear zero, exactly once, or multiple times in them defines a closed set in the profinite topology.}
  Let $s \in S$ and $e \in E$.
  Moreover, let $x_i$ be the first letter of $\rho$, i.e., $\rho(x_1, \dots, x_n) = x_i \rho'(x_1, \dots, x_n)$.

  If $i = 1$, then the variable $x_1$ also appears in $\rho'$ by our assumption on $\rho$.
  Therefore, it appears $k+1 \geq 2$ times in $\rho$.
  It follows that $se \in I$, since
  \[
    se = \rho(se, e, \dots, e) = (se)^{k+1}.
  \] 

  If $i \neq 1$, then $se = \rho(se, e, \dots, e) = e \rho'(es, e, \dots, e) = ese$.
  Then $es \in I$, since substituting $ese$ for a variable $x_j$ appearing $k + 1 \geq 2$ times in $\rho$ yields 
  \[
    se = ese = \rho(e, \dots, e, ese, e, \dots, e) = (ese)^{k+1} = (se)^{k+1}.
  \]

  The above argument shows that $SE \subseteq I$.
  The equalities $SE = I$ and $SI = I$ then follow from the general fact that $I \subseteq SE$; indeed, if $s \in I$, then $s = ss^\omega \in SE$.

  \medskip

  \textit{Ad \ref{lem:ideal-itm2}.}
  If $\varepsilon$ is primitive, then $SES = I$ by the previous item and its left-right dual.
  The remaining equality $S^n = SES$ follows from \cref{lem:nil-extension}.
\end{proof}

The restriction of identities, along with the closely related concept of primitivity, appears to play a central role in addressing related problems.
For example, in the context of determining interrelations between permutation identities, primitivity also appears implicitly in the work of Putcha and Yaqub \cite[Theorem~2]{PutchaYaqub1971}.
Almeida~\cite{Almeida1986a, Almeida1986b, Almeida1985} investigated semigroups satisfying a nontrivial (left- or right-) primitive permutation identity, calling them strongly (left or right) permutative semigroups.

\section{Proof of Theorem~\ref*{thm:main}}\label{sec:proof}

Having gathered the necessary tools, we now proceed to prove the main theorem, which is restated below for the reader’s convenience.

\thmmaximal*

\begin{proof}
  In view of \cref{pro:exp-or-per} we can assume that $\varepsilon$ is a regular expansion identity.
  We claim that such an identity necessarily implies an identity of the form
  \[
    \varepsilon \models \delta \colon x_1 \dots x_n \approx x_1 \dots x_{i-1} (x_i \dots x_j)^{\omega + 1} x_{j+1} \dots x_n
  \]
  with $1 \leq i \leq j \leq n$.
  If $\varepsilon$ is primitive, which is always the case when $n = 1$, then this claim follows from item \ref{lem:ideal-itm2} of \cref{lem:ideal}. 
  Indeed, the lemma asserts that $S^n \leq S$ is completely regular if $S \models \varepsilon$; that is, $S \models x_1 \dots x_n \approx (x_1 \dots x_n)^{\omega + 1}$.

  Suppose, without loss of generality, that $\varepsilon$ is not left-primitive.
  Then $\varepsilon = \resL \varepsilon'$ for some regular expansion identity $\varepsilon'$ of arity $n-1$.
  By induction on the arity $n$, 
  \[
    \varepsilon' \models \delta' \colon x_2 \dots x_n \approx x_2 \dots x_{i-1} (x_i \dots x_j)^{\omega + 1} x_{j+1} \dots x_n
  \]
  for some $2 \leq i \leq j \leq n$ or, equivalently, $\pvI{\varepsilon'} \subseteq \pvI{\delta'}$.
  By \cref{pro:restriction}, we can apply the operator ${\ResL}$ to this inclusion of pseudovarieties and obtain
  \[
    \pvI{\varepsilon} = \ResL \pvI{\varepsilon'} \subseteq \ResL \pvI{\delta'} = \pvI{\delta}. \qedhere
  \]
\end{proof}

To conclude this section, we emphasize an important feature of the foregoing proof, namely its constructive nature.
Specifically, suppose that a nontrivial product identity $\varepsilon\colon x_1 \dots x_n \approx \rho(x_1, \dots, x_n)$ is presented in an effective manner; for instance, with $\rho = \rho(x_1, \dots, x_n)$ given as a word or as an $\omega$-term.
More precisely, it suffices that one can extract the prefix and suffix of $\rho$ of length $n$, as well as the sets of variables that occur zero, exactly once, or multiple times within $\rho$.
Under these conditions, the above proof effectively produces an identity -- among those specified in \cref{thm:main} -- that is implied by the given product identity $\varepsilon$.

Be aware, however, that the given identity $\varepsilon$ may also imply other such identities, and it may well be of interest to determine all of them.
The following example illustrates this. 
The identity it discusses closely resembles -- and is implied by -- the finite identities mentioned in the introduction that were studied by Tully.

\begin{example}
  Let $\varepsilon \colon xy \approx y^{\omega + 1} x^{\omega + 1}$.
  For this identity, the proof of \cref{thm:main} leads directly to item $(2)$ of \cref{lem:ideal}; hence $\varepsilon \models xy \approx (xy)^{\omega + 1}$.
  In other words, given a finite semigroup $S$ satisfying $\varepsilon$, the ideal $S^2 \leq S$ is completely regular.

  On the other hand, a straight-forward computation shows that 
  \[
   \varepsilon \models xy \approx y^{\omega + 1} x^{\omega + 1} \approx (x^{\omega + 1})^{\omega + 1}(y^{\omega + 1})^{\omega + 1} = x^{\omega + 1} y^{\omega + 1} \approx yx.
  \]
  Since, in particular, $\varepsilon \models xy \approx x^{\omega + 1} y^{\omega + 1}$, the map $\psi \colon S \to S^2$ given by $s\psi = s^{\omega+1}$ witnesses $S$ to be an inflation of its completely regular ideal $S^2$.
  The latter is a semilattice of Abelian groups by commutativity.
  Using the same identity we obtain that $\varepsilon \models xy \approx x^{\omega+1}y^{\omega+1} = x^\omega x^{\omega+1}y^{\omega+1} \approx x^{\omega+1}y$ and, similarly, $\varepsilon \models xy \approx xy^{\omega + 1}$.
\end{example}

\section{Obstructions}\label{sec:obstructions}

In this section, we examine specific pseudovarieties of nilpotent semigroups that serve as primary obstructions to the satisfaction of product and expansion identities.

Recall that a semigroup $S$ with zero is called \emph{$k$-nilpotent} if every product of length at least $k$ evaluates to zero in $S$.
The pseudovariety $\pv{N}_k = \pvI{x_1 \dots x_k \approx 0}$ consists of finite such semigroups, and $\pv{N} = \bigcup_k \pv{N}_k = \pvI{x^\omega \approx 0}$ is the pseudovariety of all finite \emph{nilpotent} semigroups.
Of particular interest to us are its subpseudovarieties 
\begin{equation*}
  \pv{T} \coloneqq \pvI{xyx \approx x^2 \approx 0}
  \quad\text{and}\quad
  \pv{U} \coloneqq \pvI{xy \approx yx, x^2 \approx 0},
\end{equation*}
as well as $\pv{T}_k \coloneqq \pv{T} \cap \pv{N}_k$ and $\pv{U}_k \coloneqq \pv{U} \cap \pv{N}_k$, due to the following result.

{\renewcommand\footnote[1]{}\thmobstructions*}

Clearly, a nilpotent semigroup satisfies a $k$-ary expansion identity if and only if it is $k$-nilpotent (see, e.g., \cref{lem:nil-extension}).
Therefore, \cref{thm:obstructions} entails the following result by Almeida and Reilly~\cite[Proposition~4.4]{AlmeidaReilly1984}.

\begin{corollary}[Almeida, Reilly]
  Suppose that $\pv{V}$ is a pseudovariety with $\pv{V} \subseteq \pv{N}$.
  Then, for every $k \geq 1$, it holds that either $\pv{U}_{k+1} \subseteq \pv{V}$ or $\pv{V} \subseteq \pv{N}_k$.
\end{corollary}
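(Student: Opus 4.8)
The plan is to obtain the corollary as an immediate consequence of \cref{thm:obstructions}(2), combined with the observation recorded just above its statement that a \emph{nilpotent} finite semigroup satisfies a $k$-ary expansion identity if and only if it is $k$-nilpotent. First I would rephrase the stated dichotomy as a single implication: fixing $k \geq 1$, it suffices to verify the second alternative whenever the first one fails, that is, to show that $\pv{U}_{k+1} \not\subseteq \pv{V}$ forces $\pv{V} \subseteq \pv{N}_k$.

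So I would assume $\pv{U}_{k+1} \not\subseteq \pv{V}$. By \cref{thm:obstructions}(2) this is equivalent to $\pv{V}$ satisfying some (regular) $k$-ary expansion identity $\varepsilon \colon x_1 \dots x_k \approx \rho(x_1, \dots, x_k)$. Now let $S \in \pv{V}$ be arbitrary. Because $\pv{V} \subseteq \pv{N}$, the semigroup $S$ is nilpotent, hence its zero element is its only idempotent, i.e., $E(S) = \{0\}$; moreover $S \models \varepsilon$. Either invoking the observation quoted above directly, or applying \cref{lem:nil-extension} to $\varepsilon$ and $S$ to get $S^k = S\, E(S)\, S = \{0\}$, I conclude that $S$ is $k$-nilpotent, so $S \in \pv{N}_k$. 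As $S$ was arbitrary, $\pv{V} \subseteq \pv{N}_k$, which is what was wanted.

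I do not anticipate a genuine obstacle: all the substance sits in \cref{thm:obstructions}(2), and the reduction to $k$-nilpotency uses only the elementary fact that a nilpotent semigroup has a unique idempotent, together with \cref{lem:nil-extension}. The one point that warrants a sentence of justification is the equality $E(S) = \{0\}$, which holds because $\pv{N} \models x^\omega \approx 0$.
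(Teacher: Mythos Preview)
Your argument is correct and follows exactly the approach the paper indicates: apply \cref{thm:obstructions}(2) to obtain a $k$-ary expansion identity, then use \cref{lem:nil-extension} together with $E(S)=\{0\}$ for nilpotent $S$ to conclude $S^k=\{0\}$. The paper states this derivation in the sentence immediately preceding the corollary and does not give a separate proof.
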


Before we tend to the proof of \cref{thm:obstructions}, let us briefly describe a particular family of models $T_k$ of the free $k$-generated $\pv{T}$-semigroups $\Omega_k(\pv{T})$.
For each $k \geq 1$, the semigroup $T_k$ consists of all nonempty \emph{injective words}\footnote{A word is called injective if each of its letters occurs at most once.} over $A_k = \{ a_1, \dots, a_k \}$ and an additional element $0 \in T_k$, which acts as a zero element.
The composition of injective words equals their concatenation provided that the latter is an injective word; the composition is equal to $0$ otherwise.
In particular, $a_1, \dots, a_k \in T_k$, viewed as injective words of length one, jointly generate the semigroup $T_k$, and every nonzero element of $T_k$ is uniquely expressible as a product of these generators.

A word over $A_k$ of length strictly exceeding $k$ cannot be injective; hence, the semigroup $T_k$ is $(k+1)$-nilpotent and, in particular, finite.
Since any word of the form $xyx$ or $x^2$ with $x,y \in A_k^+$ is certainly not injective, we see that $T_k \in \pv{T}_{k+1}$.

To see that $T_k$ coincides with the free $k$-generated $\pv{T}$-semigroup, i.e., $T_k \cong \Omega_k(\pv{T})$, it suffices to observe that a word $w \in A_k^+$ is injective if and only if it does not contain a factor of the form $xyx$ or $x^2$ with $x, y \in A_k^+$; therefore, $T_k$ is the Rees quotient of $A_k^+ \cong \Omega_k(\pv{S})$ by the ideal generated by all words $xyx$ and $x^2$ with $x,y \in A_k^+$.

Since $\pv{U} = \pv{T} \cap \pv{Com}$, the free $k$-generated $\pv{U}$-semigroup $\Omega_k(\pv{U})$ is the maximal commutative quotient of $\Omega_k(\pv{T})$.
In particular, $\Omega_k(\pv{U}) \in \pv{U}_{k+1}$.

\begin{proof}[Proof of \cref{thm:obstructions}]
  As is evident from the above, the semigroup $\Omega_k(\pv{T}) \in \pv{T}_{k+1}$ does not satisfy any permutation identity on $k$ variables.
  Indeed, the map sending $\sigma \in \mathfrak{S}_k$ to the injective word $a_{1\sigma} \dots a_{k\sigma} \in T_k \cong \Omega_k(\pv{T})$ is clearly one-to-one.
  On the other hand, the semigroup $\Omega_k(\pv{U}) \in \pv{U}_{k+1} \subseteq \pv{T}_{k+1}$ does not satisfy a $k$-ary expansion identity since $\Omega_k(\pv{U})$ is nilpotent but clearly not $k$-nilpotent.
  Together with \cref{lem:regular} this proves sufficiency of the conditions in $(1)$ and $(2)$.

  For necessity, assume that $\pv{T}_{k+1} \not\subseteq \pv{V}$ (or $\pv{U}_{k+1} \not\subseteq \pv{V}$).
  Then $\pv{V}$ satisfies some identity $\varepsilon \colon \rho_1(x_1, \dots, x_m) \approx \rho_2(x_1, \dots, x_m)$ which does not hold in $\pv{T}_{k+1}$ (or $\pv{U}_{k+1}$, respectively) by Reiterman's theorem \cite[Theorem~3.1]{Reiterman1982}.
  At least one of the two sides of the identity $\varepsilon$ is a finite injective word of length $n \leq k$ for otherwise both sides would evaluate to zero in every semigroup in $\pv{T}_{k+1}$ (which is impossible since we assumed that $\pv{T}_{k+1} \not\models \varepsilon$).
  Upon renaming variables, we can write the identity in question as $\varepsilon \colon x_1 \dots x_n \approx \rho(x_1, \dots, x_m)$ where $1 \leq n \leq m$.
  Moreover, we may assume that $x_1, \dots, x_n \in c(\rho(x_1, \dots, x_m))$:  if $x_i \not \in c(\rho(x_1, \dots, x_m))$ with $1 \leq i \leq n$, then we can deduce, as in the proof of \cref{lem:regular}, that the identity $\varepsilon$ implies the regular expansion identity $\varepsilon' \colon x_1 \dots x_n \approx x_1 \dots x_{i-1}  x_i^2  x_{i+1} \dots x_n$. 

  If $\rho(x_1, \dots, x_m)$ is a finite word of length $n$, then $\rho(x_1, \dots, x_m) \approx x_{1\sigma} \dots x_{n\sigma}$ for some nontrivial permutation $\sigma \in \mathfrak{S}_n$;
  in particular, $\varepsilon$ is a nontrivial product identity.
  Clearly, this case cannot occur when $\pv{U}_{k+1} \not\models \varepsilon$ since $\pv{U}_{k+1} \subseteq \pv{Com}$.

  Otherwise, the profinite word $\rho(x_1, \dots, x_m)$ has length strictly exceeding $n$; hence, so does the right-hand side of the identity $\varepsilon' \colon x_1 \dots x_n \approx \rho(x_1, \dots, x_n, x_1, \dots, x_1)$.
  The latter is a regular expansion identity.
  Since $\varepsilon \models \varepsilon'$, this concludes the proof.
\end{proof}

\section{Independence}\label{sec:independence}

\Cref{thm:main} presents a collection of $n$-ary product identities of maximal generality, prompting the natural question: \emph{what are the implications among them?}

\medskip

For a nontrivial permutation $\sigma \in \mathfrak{S}_n$ acting on the set $\{1, \dots, n\}$, we denote the pseudovariety defined by the corresponding permutation identity by
\[
  \pv{P}^n_\sigma \coloneqq \pvI{x_1 \dots x_n \approx x_{1\sigma} \dots x_{n\sigma}}.
\]

The maximal pseudovarieties among the $\pv{P}^n_\sigma$ with $\sigma \in \mathfrak{S}_n$ and $\sigma \neq \mathrm{id}$ are in bijective correspondence with the minimal nontrivial subgroups of the symmetric group $\mathfrak{S}_n$, as is evident from the following observation.

\begin{lemma}
  Let $\sigma_1, \sigma_2 \in \mathfrak{S}_n$.
  Then $\pv{P}^n_{\sigma_1} \subseteq \pv{P}^n_{\sigma_2}$ if and only if $\langle \sigma_1 \rangle_{\mathfrak{S}_n} \supseteq \langle \sigma_2 \rangle_{\mathfrak{S}_n}$.
\end{lemma}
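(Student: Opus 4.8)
The plan is to reduce both directions to the observation that $\pv{P}^n_\sigma$ is defined not only by the single identity $\varepsilon_\sigma \colon x_1 \dots x_n \approx x_{1\sigma}\dots x_{n\sigma}$, but by the whole family $\{\varepsilon_\tau : \tau \in \langle\sigma\rangle_{\mathfrak{S}_n}\}$. I would establish this first: if $S \models \varepsilon_\sigma$, i.e.\ $t_1 \cdots t_n = t_{1\sigma}\cdots t_{n\sigma}$ for all $t_1, \dots, t_n \in S$, then applying this to the tuple $(t_{1\sigma}, \dots, t_{n\sigma})$ and iterating gives $t_1 \cdots t_n = t_{1\sigma^k}\cdots t_{n\sigma^k}$ for every $k \geq 0$; since $\langle\sigma\rangle_{\mathfrak{S}_n}$ is finite cyclic, $S \models \varepsilon_\tau$ for all $\tau \in \langle\sigma\rangle_{\mathfrak{S}_n}$. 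The implication ``$\langle\sigma_1\rangle_{\mathfrak{S}_n} \supseteq \langle\sigma_2\rangle_{\mathfrak{S}_n} \Rightarrow \pv{P}^n_{\sigma_1} \subseteq \pv{P}^n_{\sigma_2}$'' is then immediate, writing $\sigma_2 = \sigma_1^k$.

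For the converse I would exhibit, for a given $\sigma_1$, a single finite semigroup $S \in \pv{P}^n_{\sigma_1}$ that satisfies $\varepsilon_\sigma$ for no permutation $\sigma \notin \langle\sigma_1\rangle_{\mathfrak{S}_n}$; then $\pv{P}^n_{\sigma_1} \subseteq \pv{P}^n_{\sigma_2}$, applied to $S$, forces $\sigma_2 \in \langle\sigma_1\rangle_{\mathfrak{S}_n}$, hence $\langle\sigma_2\rangle_{\mathfrak{S}_n} \subseteq \langle\sigma_1\rangle_{\mathfrak{S}_n}$. To construct $S$, I would work inside $T_n \cong \Omega_n(\pv{T})$ and write $w_\tau = a_{1\tau}\dots a_{n\tau}$ for $\tau \in \mathfrak{S}_n$ — these are exactly the injective words of full length $n$. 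Since $w_\tau z = z w_\tau = 0$ in $T_n$ for every $\tau$ and every $z \in T_n$, any equivalence relation on $T_n$ that is the identity outside $\{w_\tau : \tau \in \mathfrak{S}_n\}$ is automatically a congruence; I take $\theta$ to be the one whose classes inside $\{w_\tau\}$ are the right cosets $\langle\sigma_1\rangle_{\mathfrak{S}_n}\tau$, set $S \coloneqq T_n/\theta$, and write $\bar a_1, \dots, \bar a_n$ for the images of the generators.

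It then remains to verify (i) $S \models \varepsilon_{\sigma_1}$, and (ii) $S \models \varepsilon_\sigma$ implies $\sigma \in \langle\sigma_1\rangle_{\mathfrak{S}_n}$. For (i), given $(s_1, \dots, s_n) \in S^n$, the product $s_1 \cdots s_n$ is nonzero only when each $s_i$ equals a generator $\bar a_{i\tau}$ for one common $\tau$ (anything longer or any $0$ pushes the total length past $n$); in that case $s_1 \cdots s_n = [w_\tau]$ and $s_{1\sigma_1}\cdots s_{n\sigma_1} = [w_{\sigma_1\tau}]$, and these coincide because $\langle\sigma_1\rangle_{\mathfrak{S}_n}\tau = \langle\sigma_1\rangle_{\mathfrak{S}_n}\sigma_1\tau$, while in all remaining cases both sides are $0$. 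For (ii), evaluating $\varepsilon_\sigma$ at $x_i = \bar a_i$ gives $[w_{\mathrm{id}}] = [w_\sigma]$, i.e.\ $\langle\sigma_1\rangle_{\mathfrak{S}_n} = \langle\sigma_1\rangle_{\mathfrak{S}_n}\sigma$, so $\sigma \in \langle\sigma_1\rangle_{\mathfrak{S}_n}$.

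The step I expect to need the most care — and the only real obstacle — is being precise about why $\theta$ is a congruence on all of $T_n$ (not merely on the null ideal $\{w_\tau\}\cup\{0\}$) and why $S$ satisfies $\varepsilon_{\sigma_1}$ under \emph{every} substitution, not just the ``generic'' ones by distinct generators. Both reduce to the single structural fact that in $T_n$ every product reaching the top layer $\{w_\tau\}$, or of total length exceeding $n$, collapses to $0$; once that is recorded, (i) and (ii) are routine coset bookkeeping and the ``if'' direction is the iteration argument above.
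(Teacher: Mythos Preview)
Your argument is correct and follows essentially the same route as the paper: the semigroup $S$ you construct is precisely the maximal quotient of $T_n$ belonging to $\pv{P}^n_{\sigma_1}$ that the paper invokes in a single sentence, and your coset bookkeeping is exactly what is needed to unpack that hint. The only difference is the level of detail---you spell out why $\theta$ is a congruence and why the identity holds under all substitutions, whereas the paper leaves this implicit.
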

\begin{proof}
  The sufficiency of the condition is obvious.
  To see necessity, consider the maximal quotient belonging to $\pv{P}^n_{\sigma_2}$ of the semigroup $T_n$ defined in \cref{sec:obstructions}.
\end{proof}

For a discussion of inclusions where the parameter $n$ is also allowed to vary between pseudovarieties, the reader is referred to the work of Putcha and Yaqub~\cite{PutchaYaqub1971}.

\medskip

We now turn to the family of almost completely regular pseudovarieties that are defined for $1 \leq i \leq j \leq n$ by 
\[
  \pv{ACR}_{i,j}^{n} \coloneqq \pvI{x_1 \dots x_n \approx x_1 \dots x_{i-1} (x_i \dots x_j)^{\omega + 1} x_{j+1} \dots x_n}.
\]

The following argument shows that they are incomparable with the former.

\begin{lemma}
  Let $1 \leq i \leq j \leq n$ and $\sigma \in \mathfrak{S}_n \setminus \{\mathrm{id}\}$.
  Then $\smash{\pv{ACR}_{i,j}^{n}} \not\subseteq \pv{P}^n_\sigma \not\subseteq \smash{\pv{ACR}_{i,j}^{n}}$.
\end{lemma}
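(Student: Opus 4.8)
The plan is to establish the two non-inclusions separately, in each case by exhibiting a semigroup that lies in one pseudovariety but not the other. Both witnesses can be drawn from the nilpotent semigroups $T_n \cong \Omega_n(\pv{T})$ introduced in \cref{sec:obstructions}, or small quotients thereof, since these are precisely the semigroups designed to violate product identities in a controlled way.

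For $\smash{\pv{ACR}_{i,j}^{n}} \not\subseteq \pv{P}^n_\sigma$, I would first note that the defining identity of $\smash{\pv{ACR}_{i,j}^{n}}$ is a regular $n$-ary expansion identity (its right-hand side has length $n+1$), so by the sufficiency direction of \cref{thm:obstructions}\,(1) — or more directly because $T_n$ is $(n+1)$-nilpotent and hence satisfies every $n$-ary expansion identity via \cref{lem:nil-extension} — we get $T_n \in \smash{\pv{ACR}_{i,j}^{n}}$. On the other hand $T_n \not\models x_1 \dots x_n \approx x_{1\sigma} \dots x_{n\sigma}$: evaluating the generators $a_1, \dots, a_n$ gives the distinct injective words $a_1 \dots a_n$ and $a_{1\sigma} \dots a_{n\sigma}$, which differ since $\sigma$ is nontrivial. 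This is exactly the argument already used in the proof of \cref{thm:obstructions} and in the preceding lemma, so this direction is essentially immediate.

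For the reverse non-inclusion $\pv{P}^n_\sigma \not\subseteq \smash{\pv{ACR}_{i,j}^{n}}$, I would exhibit a permutative semigroup that fails the almost-complete-regularity identity. The natural candidate is the monogenic semigroup $C_{n+1,1} = \langle a : a^{n+1} = a^{n+2}\rangle$ already invoked in the proof of \cref{pro:exp-or-per}: it satisfies every permutation identity (being commutative), so $C_{n+1,1} \in \pv{P}^n_\sigma$, yet it does not satisfy any $n$-ary expansion identity — in particular, substituting $a$ for all variables in $x_1 \dots x_n \approx x_1 \dots x_{i-1}(x_i \dots x_j)^{\omega+1}x_{j+1}\dots x_n$ gives $a^n$ on the left and $a^{n+(j-i+1)}$ (after collapsing $\omega$ to the idempotent power) on the right, and since $n < n+1 \le n+(j-i+1)$ these differ in $C_{n+1,1}$. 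Here one only needs $n \le n$ versus a strictly larger power landing past the nilpotency threshold $n+1$; a careful but routine check of the exponent arithmetic confirms $a^n \neq a^{n + (j-i+1)}$ in $C_{n+1,1}$ for every admissible $i \le j$.

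The main obstacle — such as it is — is purely bookkeeping in the second direction: one must confirm that the right-hand side of the $\smash{\pv{ACR}_{i,j}^{n}}$-identity, when all variables are specialized to a single element $a$ of a nilpotent monogenic semigroup, evaluates to a power of $a$ strictly exceeding $n$, so that it is killed by $(n+1)$-nilpotency while $a^n$ is not. This comes down to observing $|x_i \dots x_j|^{\omega+1}$ contributes at least $j - i + 1 \ge 1$ extra factors beyond the baseline length $n$, hence the right-hand side has length $\ge n+1$ as a word, and $C_{n+1,1}$ separates $a^n$ from $a^{n+1}$. No deep idea is required beyond reusing the two separating semigroups $T_n$ and $C_{n+1,1}$ that the paper has already put on the table.
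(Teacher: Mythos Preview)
Your second non-inclusion ($\pv{P}^n_\sigma \not\subseteq \pv{ACR}_{i,j}^{n}$ via $C_{n+1,1}$) is correct and is exactly the paper's argument. The first non-inclusion, however, contains a genuine error: $T_n$ does \emph{not} lie in $\pv{ACR}_{i,j}^{n}$.

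Concretely, evaluate the defining identity at the generators $a_1,\dots,a_n$ of $T_n$. The left-hand side is the nonzero injective word $a_1\cdots a_n$. On the right-hand side, $(a_i\cdots a_j)^2 = 0$ in $T_n$ (any repeated letter kills the product), so $(a_i\cdots a_j)^{\omega+1}=0$ and the whole right-hand side collapses to $0$. Thus $T_n \not\models$ the $\pv{ACR}_{i,j}^{n}$-identity. Your appeal to \cref{lem:nil-extension} is backwards: that lemma deduces $S^n=SES$ \emph{from} an expansion identity, not the other way around. The correct equivalence (stated just after \cref{thm:obstructions}) is that a nilpotent semigroup satisfies an $n$-ary expansion identity iff it is $n$-nilpotent, and $T_n$ is $(n{+}1)$-nilpotent but not $n$-nilpotent. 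Also, the right-hand side of the $\pv{ACR}_{i,j}^{n}$-identity has infinite length, not $n+1$.

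This gap cannot be patched by shrinking to $T_{n-1}$: that semigroup is $n$-nilpotent and therefore satisfies \emph{every} $n$-ary identity trivially, including the permutation identity. The paper instead uses finite groups for this direction: every group lies in $\pv{ACR}_{i,j}^{n}$ since $(x_i\cdots x_j)^{\omega+1}=x_i\cdots x_j$ there, while any group in $\pv{P}^n_\sigma$ is commutative (set all but two variables to the identity and use an inversion of $\sigma$), so any non-Abelian finite group witnesses $\pv{ACR}_{i,j}^{n}\not\subseteq\pv{P}^n_\sigma$.
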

\begin{proof}
  The pseudovariety $\smash{\pv{ACR}_{i,j}^{n}}$ contains all finite groups.
  Since every group in $\pv{P}^n_\sigma$ is clearly commutative, we have $\smash{\pv{ACR}_{i,j}^{n}} \not\subseteq \pv{P}_\sigma$.
  Conversely, the pseudovariety~$\pv{P}^n_\sigma$ contains all finite monogenic semigroups, whereas $C_{r,1} = \langle a : a^r = a^{r+1} \rangle$ is contained in the pseudovariety~$\smash{\pv{ACR}^{n}_{i,j}}$ if and only if $r \leq n$. 
  Therefore, $\pv{P}^n_\sigma \not\subseteq \smash{\pv{ACR}_{i,j}^{n}}$.
\end{proof}

Since every pseudovariety $\pv{V}$ satisfies $\pv{V} \subseteq \ResL \pv{V}$ and $\pv{V} \subseteq \pv{V} \pvM \pv{N}_k$ for all $k$, there are many inclusions among the pseudovarieties $\smash{\pv{ACR}_{i,j}^{n}}$.
Such inclusions are, however, precluded under a fixed value of the parameter $n$.

\begin{lemma}\label{pro:max-CR}
  Let $n \geq 1$.
  The $\smash{\pv{ACR}_{i,j}^{n}}$ with $1 \leq i \leq j \leq n$ are pairwise incomparable.
\end{lemma}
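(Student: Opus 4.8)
To show that the pseudovarieties $\pv{ACR}_{i,j}^{n}$ with $1 \leq i \leq j \leq n$ are pairwise incomparable for fixed $n$, I would fix two distinct pairs $(i,j) \neq (i',j')$ and exhibit, for each, a finite semigroup lying in one but not the other. The natural candidates are relatively free semigroups — specifically suitable quotients of a free semigroup on $n$ generators, or of the semigroup $T_n$ introduced in \cref{sec:obstructions}. The key observation is that $\pv{ACR}_{i,j}^{n}$ is defined by the single identity $x_1 \dots x_n \approx x_1 \dots x_{i-1}(x_i \dots x_j)^{\omega+1} x_{j+1}\dots x_n$, so a semigroup fails this identity precisely when there is an assignment of the variables $x_1, \dots, x_n$ making the two sides differ.

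\textbf{Key steps.} First I would build, for a given target pair $(i,j)$, a finite semigroup $S_{i,j}$ that satisfies the $(i,j)$-identity but detects the ``shape'' of the factor block $x_i \dots x_j$ in a way that distinguishes it from any other block $x_{i'}\dots x_{j'}$. A clean construction: take the monogenic semigroup $C_{n+1,1} = \langle a : a^{n+1} = a^{n+2}\rangle$ (so that $a, a^2, \dots, a^{n+1}$ are distinct and $a^{n+1}$ is the idempotent), and assign to the variables certain powers of $a$. Under the substitution $x_\ell \mapsto a$ for all $\ell$, the left side of every $n$-ary product identity evaluates to $a^n$, while the right side of the $(i,j)$-identity evaluates to $a^{i-1} (a^{j-i+1})^{\omega+1} a^{n-j}$; since $a^{j-i+1}$ has an idempotent power equal to $a^{n+1}$, this equals $a^{i-1}\cdot a^{n+1} \cdot a^{n-j} = a^{n+1}$ (as $i-1 + (n+1) + (n-j) \geq n+1$), independent of $(i,j)$. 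So the all-$a$ substitution does not separate them, and I need a finer assignment. The real separator should be an assignment that keeps the block $x_i \dots x_j$ short enough to stay non-idempotent while making the complementary part behave differently for the two candidate pairs — e.g. in $C_{r,1}$ with $r$ chosen near $n$, assign most variables to $a$ but tune the total so that $(x_i \dots x_j)^{\omega+1}$ collapses iff the block length is large enough. I would carry this out by a direct case analysis on whether $i < i'$, $i = i'$ with $j < j'$, etc., using the monogenic semigroups $C_{r,1}$ for the right choices of $r$.

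\textbf{An alternative, cleaner route.} Rather than wrestle with monogenic semigroups, one can use the free $\pv{ACR}_{i,j}^{n}$-semigroup (or $\Omega_n$ of it), or better: observe that $\pv{ACR}_{i,j}^{n}$ contains $C_{r,1}$ if and only if $r \leq n$, contains all finite groups, and — crucially — note that the Rees quotient of $A_n^+$ by a suitable ideal witnesses exactly which rearrangement-with-idempotent-insertion identities hold. Concretely, I would show $\pv{ACR}_{i,j}^{n} \not\subseteq \pv{ACR}_{i',j'}^{n}$ by taking the relatively free semigroup $F = \Omega_n(\pv{ACR}_{i',j'}^{n})$ on generators $\bar x_1, \dots, \bar x_n$; it satisfies the $(i',j')$-identity by construction, and I must check it fails the $(i,j)$-identity, i.e. that $\bar x_1 \cdots \bar x_n \neq \bar x_1 \cdots \bar x_{i-1}(\bar x_i \cdots \bar x_j)^{\omega+1} \bar x_{j+1}\cdots \bar x_n$ in $F$. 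Since $F$ is computable (as $\pv{ACR}_{i',j'}^{n}$ is locally finite — it contains the $(k+1)$-nilpotent obstructions only up to a point), this reduces to a combinatorial fact about which words become equal under the single defining relation of $\pv{ACR}_{i',j'}^{n}$ together with the profinite structure.

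\textbf{Main obstacle.} The hard part will be the combinatorial core: proving that inserting an idempotent power of the block $x_{i'}\dots x_{j'}$ does \emph{not} let you derive the corresponding identity for a different block $x_i \dots x_j$. This is essentially a confluence/normal-form argument for the rewriting system generated by the $(i',j')$-relation, complicated by the fact that $\omega$-powers are profinite, not finitary, so one cannot simply run a terminating rewriting system. I expect the slickest way around this is the explicit-model approach: find one well-chosen finite semigroup (likely a small transformation semigroup or a carefully layered nilpotent-by-group construction on a handful of elements) whose multiplication I can compute by hand and on which the two identities visibly disagree, then verify membership in $\pv{ACR}_{i',j'}^{n}$ by checking the single defining identity directly. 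Organizing the case split over the finitely many ``types'' of pairs $(i,j)$ versus $(i',j')$ — differing left endpoint, differing right endpoint, nested, overlapping, disjoint — and giving a uniform separating family across all of them is where the bookkeeping lives.
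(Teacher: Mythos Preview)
Your proposal correctly lands on the right general strategy at the end --- explicit separating models --- but it does not construct them, and the concrete attempt you do make cannot work. Monogenic semigroups $C_{r,1}$ are unable to separate the $\pv{ACR}^n_{i,j}$ at all: under any assignment $x_\ell \mapsto a^{e_\ell}$ in $C_{r,1}$, the middle factor $(x_i\cdots x_j)^{\omega+1}$ already equals the unique idempotent $a^r$, so the entire right-hand side collapses to $a^r$ regardless of $(i,j)$. Hence $C_{r,1}\in\pv{ACR}^n_{i,j}$ iff $r\le n$, independently of the pair $(i,j)$, and no ``finer assignment'' in $C_{r,1}$ can detect the block position. (More generally, commutative semigroups can at best see the block \emph{length} $j-i+1$, not its position, since $(s_1s_2)^\omega = s_1^\omega s_2^\omega$ in a finite commutative semigroup.) The relatively-free-semigroup route is sound in principle, but --- as you yourself flag --- it reduces to a normal-form argument for a profinite rewriting system that you do not supply; that is precisely the hard part, not a formality.

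The paper's proof executes your ``explicit model'' idea with two carefully chosen one-parameter families of genuinely non-commutative semigroups whose membership in $\pv{ACR}^n_{i,j}$ depends only on $j$: the prefix semigroups $W_k$ (words of length $\le k$ over $\{a,b\}$ with longest-length-$k$-prefix multiplication), which lie in $\pv{K}_k$ and hence satisfy the $(i,j)$-identity exactly when $k\le j$; and a bespoke semigroup $V_{k,n}=\langle a,b : a^n=a^{n+1},\,ab=b,\,ba^{n-k+1}=0\rangle$ with the complementary behaviour, $V_{k,n}\in\pv{ACR}^n_{i,j}$ exactly when $k>j$. These two families together force $j'=j$ from any inclusion $\pv{ACR}^n_{i',j'}\subseteq\pv{ACR}^n_{i,j}$, and left--right duality then forces $i'=i$. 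The essential idea you are missing is that the separators must make the \emph{position} of the $\omega$-block matter, which requires non-commutativity of a specific, position-sensitive kind (here, a $\pv{K}_k$-type prefix-absorption and its engineered failure); identifying such semigroups is the actual content of the proof.
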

\begin{proof}
  To see the incomparability we consider the following semigroups.

  \smallskip

  $\bullet$\hspace{\labelsep}For $1 \leq k$, let $W_k$ be the semigroup consisting of all nonempty words of length at most $k$ over the binary alphabet $\{a,b\}$.
  The composition of such words is the longest prefix of length at most $k$ of their concatenation.
  This semigroup is clearly finite and belongs to the pseudovariety $\pv{K}_k = \pvI{x_1 \dots x_k \approx x_1 \dots x_k  y}$.

  Since the prefixes of length $j$ of the left- and right-hand side of the defining identity of $\smash{\pv{ACR}^{n}_{i,j}}$ coincide, it holds that $W_k \in \pv{K}_k \subseteq \smash{\pv{ACR}^{n}_{i,j}}$ whenever $1 \leq k \leq j$.
  Conversely, if $1 \leq j < k$, then $W_{k} \not\in \smash{\pv{ACR}^{n}_{i,j}}$ as is witnessed by the computation
  \[
    a^{j}b^{k-j} = a^{j}b^\omega = a^{i-1}a^{j-i + 1}b^\omega \neq a^{i-1}(a^{j-i+1})^{\omega + 1}b^\omega = a^{\omega} = a^k.
  \]

  $\bullet$\hspace{\labelsep}For $1 \leq k \leq n$, let $V_{k,n}$ denote the semigroup with zero given by
  \[
    V_{k,n} \coloneqq \langle a, b : a^n = a^{n+1}, ab = b, b a^{n - k + 1} = 0 \rangle.
  \]
  Its elements are $a = a^1, \dots, a^n = a^\omega$, $b = ba^{0}, \dots, ba^{n-k}$, and $ba^{n-k+1} = b^2 = 0$.

  Let $s_1 \dots s_n$ be an $n$-ary product in $V_{k,n}$.
  We will first consider the case where, for some $1 \leq l \leq n$, the factor $s_l$ is of the form $ba^{r_l}$ with $r_l \geq 0$, all remaining factors are of the form $s_m = a^{r_m}$ with $r_m \geq 1$, and where $r \coloneqq r_l + r_{l+1} + \dots + r_n \leq n - k$.

  Such a product then satisfies $s_1 \dots s_n = ba^r \neq 0$.
  Furthermore, for $1 \leq i \leq j \leq n$, the product $s_1 \dots s_{i-1} (s_i \dots s_j)^{\omega+1} s_{j+1} \dots s_n$ equals $ba^r$ if $j < l$ and $0$ otherwise.
  In particular, this shows that $V_{k,n} \not\in \smash{\pv{ACR}^{n}_{i,j}}$ whenever $1 \leq k \leq j$ (by setting $l = k$, $r_l = 0$, and $r_1 = \dots = r_{l-1} = r_{l+1} = \dots = r_n = 1$ in the above).
  On the other hand, the product $s_1 \dots s_n$ obeys the defining identity of $\smash{\pv{ACR}^{n}_{i,j}}$ whenever $1 \leq j < k$, since we always have $k \leq l$ in the above as $n - l \leq r_l + r_{l+1} + \dots + r_n = r \leq n - k$.

  Every $n$-ary product $s_1 \dots s_n$ in $V_{k,n}$ not of the form discussed above satisfies the equality $s_1 \dots s_n = s_1 \dots s_{i-1} (s_i \dots s_j)^{\omega + 1} s_{j+1} \dots s_n$ for all $1 \leq i \leq j \leq n$. 
  Indeed, for such a product either all of the factors $s_1, \dots, s_n$ are powers of $a$, in which case both sides evaluate to $a^\omega$, or both sides of the identity evaluate to $0$.
  Since the latter is easy to verify in a case-by-case manner, we omit the details.

  \smallskip

  In summary: for $1 \leq k,j \leq n$, we have $W_k \in \pv{ACR}^{n}_{i,j}$ if and only if $V_{k,n} \not\in \pv{ACR}^{n}_{i,j}$ if and only if $k \leq j$. 
  Therefore, $\smash{\pv{ACR}^{n}_{i',j'}} \subseteq \smash{\pv{ACR}^{n}_{i,j}}$ implies $j' = j$ and, by symmetry, also $i' = i$; the pseudovarities are therefore pairwise incomparable.
\end{proof}

Fixing the parameter $n$, the above discussion provides complete information about inclusions among the pseudovarieties $\pv{P}^n_\sigma$ and $\pv{ACR}^n_{i,j}$ under consideration. 
However, a deeper understanding of the implications among $n$-ary product identities requires examining inclusions involving intersections of these pseudovarieties, i.e., to determine the meet subsemilattice of $\mathcal{L}(\pv{S})$ that they generate.
Both this issue and its generalization to varying $n$ constitute open problems for future research.

\section{Permutative Pseudovarieties}\label{sec:permutative}

Recall from the introduction that we call a semigroup or pseudovariety \emph{permutative} if it satisfies a permutation identity.
The finite permutative semigroups form a pseudovariety, which we will denote by $\pv{Perm} = \pvI{x^\omega  y_1y_2  z^\omega \approx x^\omega  y_2y_1  z^\omega}$.\footnote{For the defining identity of the pseudovariety $\pv{Perm}$, see \cite[Corollary~3.10]{Almeida1986a}.}

This pseudovariety is \emph{not} permutative, as there is no permutation identity satisfied by all its members simultaneously.
In fact, since $\pv{T} \subseteq \pv{Perm}$, this also holds for nontrivial product identities by \cref{thm:obstructions}.
The following result, which we prove in this section, shows this to be the only obstruction.

\thmpermutative*

The pseudovarieties of permtuative semigroups have been characterized by Almeida in terms of forbidden members \cite{Almeida1986a, Almeida1986b}.\footnote{The details of this characterization are also discussed in Almeida's book~\cite[Section~6.5]{Almeida1994}.}
Naturally, non-Abelian groups must be excluded. 
The minimal such groups were determined by Rédei~\cite[Satz~8]{Redei1947}.

Moreover, it is also necessary to exclude other minimal noncommutative monoids, viz.\ $B^1(2,1)$, $B^1(1,2)$, and $N^1$, as determined by Margolis and Pin~\cite[Theorem~2.1]{MargolisPin1984}.
Notably, the monoids $B^1(2,1)$ and $B^1(1,2)$ generate the pseudovarieties
\[
  \pv{LRB} \coloneqq \pvI{x^2 \approx x, xyx \approx xy}
  \quad\text{and}\quad
  \pv{RRB} \coloneqq \pvI{x^2 \approx x, xyx \approx yx}
\]
of \emph{left-} and \emph{right-regular} bands, respectively, which are precisely the minimal nonpermutative pseudovarieties of bands (see \cite[Theorem~10]{YamadaKimura1958} and \cite{Biryukov1970,Fennemore1971,Gerhard1970}).

For semigroups, Almeida's semigroups $Y$ and $Q$ (see \cite{Almeida1986b}), as well as Rasin's semigroups $K_p = \mathcal{M}\big(\mathbb{Z}/p\mathbb{Z}; 2, 2; \big(\begin{smallmatrix} 0 & 0 \\ 0 & 1 \end{smallmatrix}\big) \big)$ (see \cite{Rasin1979}) must be excluded for every prime~$p$.

\medskip

A short inspection of defining identities reveals that $\pv{T} \subseteq \pv{V}(N^1), \pv{V}(Y), \pv{V}(Q)$.
These identities are as follows (see \cite{Edmunds1977,Edmunds1980,Almeida1986b}):
\begin{multline*}
  \pv{V}(N^1) = \pvI{x^3 \approx x^2, x^2y \approx xyx \approx yx^2}, \quad
  \pv{V}(Y) = \pvI{x^3 \approx x^2, x^2y^2 \approx xyx \approx y^2x^2}, \\
  \pv{V}(Q) = \pvI{x^3 \approx x^2, x^2yx^2 \approx xyx, y_1^2xy_2^2 \approx y_1^2xy_2^2, xy_1xy_2x \approx xy_2xy_1x}.
\end{multline*}
In particular, the monoid $N^1$ and the semigroups $Y$ and $Q$ can be omitted from the list of excludants, as they are replaced by $\pv{T}$, if one is interested in permutative pseudovarieties rather than pseudovarieties of permutative semigroups.

Consequently, we also obtain the following characterizations for aperiodic and nilpotent semigroups,
that is, for $\pv{A} = \pvI{x^{\omega} \approx x^{\omega+1}}$ and $\pv{N} = \pvI{x^{\omega} \approx 0}$, repsectively.

\begin{corollary}
  Let $\pv{V}$ be a pseudovariety.
  Then the following statements hold.
  \begin{enumerate}
    \item If $\pv{V} \subseteq \pv{A}$, then $\pv{V}$ is permutative if and only if $\pv{T}, \pv{LRB}, \pv{RRB} \not\subseteq \pv{V}$.
    \item If $\pv{V} \subseteq \pv{N}$, then $\pv{V}$ is permutative if and only if $\pv{T} \not\subseteq\pv{V}$.
  \end{enumerate}
\end{corollary}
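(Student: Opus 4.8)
The plan is to deduce both statements from \cref{thm:permutative}, which reduces permutativity of a pseudovariety $\pv{V}$ to the conjunction of $\pv{T} \not\subseteq \pv{V}$ and $\pv{V} \subseteq \pv{Perm}$. The only remaining task is to simplify the condition $\pv{V} \subseteq \pv{Perm}$ under the respective hypotheses $\pv{V} \subseteq \pv{A}$ and $\pv{V} \subseteq \pv{N}$, using the description of $\pv{Perm}$ by forbidden members recalled above.

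Statement $(2)$ is the shorter of the two. The key point is that $\pv{N} \subseteq \pv{Perm}$: every finite nilpotent semigroup satisfies $x^\omega \approx 0$, so under any continuous homomorphism into such a semigroup both sides of the defining identity $x^\omega y_1 y_2 z^\omega \approx x^\omega y_2 y_1 z^\omega$ of $\pv{Perm}$ evaluate to the zero element. Hence $\pv{V} \subseteq \pv{N}$ already forces $\pv{V} \subseteq \pv{Perm}$, and \cref{thm:permutative} leaves only the condition $\pv{T} \not\subseteq \pv{V}$.

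For statement $(1)$ I would argue the two implications separately. The forward implication needs no aperiodicity hypothesis: if $\pv{V}$ is permutative then $\pv{T} \not\subseteq \pv{V}$ by \cref{thm:permutative}, and since every subpseudovariety of a permutative pseudovariety is again permutative whereas $\pv{LRB}$ and $\pv{RRB}$ are not permutative, neither of them can be contained in $\pv{V}$. For the converse I assume $\pv{V} \subseteq \pv{A}$ together with $\pv{T}, \pv{LRB}, \pv{RRB} \not\subseteq \pv{V}$ and aim to establish $\pv{V} \subseteq \pv{Perm}$, after which \cref{thm:permutative} concludes. Here I invoke Almeida's list of excluded semigroups and verify each entry in turn: aperiodicity of $\pv{V}$ rules out every nontrivial group, hence all minimal non-Abelian groups, as well as Rasin's semigroups $K_p$, each of which contains a copy of $\mathbb{Z}/p\mathbb{Z}$; the monoids $B^1(2,1)$ and $B^1(1,2)$ generate $\pv{LRB}$ and $\pv{RRB}$, so their exclusion from $\pv{V}$ is equivalent to $\pv{LRB}, \pv{RRB} \not\subseteq \pv{V}$; and since $\pv{T} \subseteq \pv{V}(N^1), \pv{V}(Y), \pv{V}(Q)$, the hypothesis $\pv{T} \not\subseteq \pv{V}$ forces $N^1, Y, Q \notin \pv{V}$. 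Thus $\pv{V}$ avoids every forbidden member, so $\pv{V} \subseteq \pv{Perm}$.

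The argument amounts to bookkeeping once \cref{thm:permutative} and Almeida's characterization are available; the only point requiring a little care is confirming that the three additional excludants $N^1$, $Y$, and $Q$ are absorbed by $\pv{T}$, which is precisely what the comparison of their defining identities with that of $\pv{T}$, carried out above, delivers.
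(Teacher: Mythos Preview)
Your proof is correct and matches the paper's intended argument: the paper gives no explicit proof of this corollary, merely writing ``Consequently'' after recalling Almeida's list of forbidden members and the inclusions $\pv{T} \subseteq \pv{V}(N^1), \pv{V}(Y), \pv{V}(Q)$, and your bookkeeping is exactly what that word stands for. Your direct verification of $\pv{N} \subseteq \pv{Perm}$ via $x^\omega \approx 0$ in part~(2) is a clean shortcut that bypasses Almeida's characterization altogether, but it is entirely in the same spirit.
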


Let us now prepare the proof of \cref{thm:permutative}, which is essentially a combination of \cref{thm:main,thm:obstructions}, and the following two rather simple observations.

\begin{lemma}\label{lem:I-medial}
  Let $S$ be a finite semigroup, and suppose that $S$ is completely regular and permutative.
  Then $S$ is medial, i.e., it satisfies the identity $x y_1y_2 z \approx x  y_2y_1  z$.
\end{lemma}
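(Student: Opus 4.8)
The plan is to exploit the fact that a completely regular permutative semigroup is a semilattice of groups, and that these groups must be Abelian. First I would recall that complete regularity means $S$ is a union of its subgroups, i.e., a semilattice $Y$ of completely simple semigroups $S_\alpha$ ($\alpha \in Y$); in fact a permutative semigroup cannot contain a nontrivial rectangular band, so each $S_\alpha$ is a group $G_\alpha$. Indeed, if some $S_\alpha$ were not a group, it would contain two distinct idempotents $e, f$ in the same $\mathcal{J}$-class with $ef = e$ and $fe = f$ (or a copy of the $2$-element left- or right-zero semigroup), and one checks directly that no permutation identity $x_1 \dots x_n \approx x_{1\sigma} \dots x_{n\sigma}$ with $\sigma \neq \mathrm{id}$ can hold there — substituting suitable idempotents separates the two sides. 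Hence $S$ is a (strong) semilattice of groups $G_\alpha$.

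Next I would show each $G_\alpha$ is Abelian. Fix $\alpha$ and pick $a, b \in G_\alpha$. The permutation identity gives, for any choice of values, $x_1 \dots x_n \approx x_{1\sigma} \dots x_{n\sigma}$; choosing $\sigma$ to move some position and substituting $a$, $b$, together with the identity $e$ of $G_\alpha$ in the remaining positions, collapses the identity (since $e$ is the identity of the group and multiplication stays within $G_\alpha$) to a nontrivial relation among products of $a$ and $b$, which forces $ab = ba$. More carefully, since $\sigma \neq \mathrm{id}$ there are indices $p < q$ with $p\sigma > q\sigma$ or one can reduce to the transposition case as in the discussion after \cref{def:restriction}: restriction shows it suffices to handle $x y_1 y_2 z \approx x y_2 y_1 z$-type collapses, and here plugging $e$ for the outer variables and $a, b$ for the inner two yields $ab = ba$. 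So every $G_\alpha$ is Abelian and $S$ is a semilattice of Abelian groups.

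Finally I would verify that a semilattice of Abelian groups is medial. Let $x, y_1, y_2, z \in S$ lie in group components indexed by $\alpha, \beta_1, \beta_2, \gamma$, so that both $x y_1 y_2 z$ and $x y_2 y_1 z$ lie in the component indexed by $\delta := \alpha\beta_1\beta_2\gamma$. Using the structure maps $\varphi_{\beta,\delta}\colon G_\beta \to G_\delta$ of the strong semilattice decomposition, both products equal $\varphi_{\alpha,\delta}(x)\,\varphi_{\beta_1,\delta}(y_1)\,\varphi_{\beta_2,\delta}(y_2)\,\varphi_{\gamma,\delta}(z)$ up to the order of the two middle factors, which commute in the Abelian group $G_\delta$. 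Hence $x y_1 y_2 z = x y_2 y_1 z$, as desired. The main obstacle is the first step — ruling out nontrivial rectangular-band behaviour inside the $\mathcal{J}$-classes — but this is a standard fact about permutative (indeed even permutative completely simple) semigroups and can be dispatched by a direct substitution argument or cited from the literature on permutative semigroups.
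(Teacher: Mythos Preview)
Your structural approach has a genuine error in the first step: it is \emph{not} true that a completely regular permutative semigroup must be a semilattice of groups. Consider the two-element left-zero semigroup $L_2 = \{e,f\}$ with $ef = e$ and $fe = f$. This is a nontrivial rectangular band, it is completely regular, and it is permutative --- for instance it satisfies $x_1x_2x_3 \approx x_1x_3x_2$, since in $L_2$ every product equals its leftmost factor. Your claimed verification that ``no permutation identity with $\sigma \neq \mathrm{id}$ can hold there'' fails precisely because permutations fixing the first (or last) position are invisible to left-zero (or right-zero) bands. So the reduction to ``semilattice of Abelian groups'' collapses, and with it steps~2 and~3 as you have written them.

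The paper's proof avoids all structure theory and is a one-liner. It uses two ingredients directly: complete regularity gives $x \approx x x^{\omega}$ and $z \approx z^{\omega} z$, while membership in $\pv{Perm}$ is, by the identity quoted at the start of \cref{sec:permutative}, equivalent to $x^{\omega} y_1 y_2 z^{\omega} \approx x^{\omega} y_2 y_1 z^{\omega}$. Chaining these yields
\[
  x y_1 y_2 z \;\approx\; x\, x^{\omega} y_1 y_2 z^{\omega}\, z \;\approx\; x\, x^{\omega} y_2 y_1 z^{\omega}\, z \;\approx\; x y_2 y_1 z,
\]
and no decomposition of $S$ is needed at all.
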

\begin{proof}
  Indeed, $S \models x y_1y_2  z \approx x x^{\omega}  y_1y_2  z^\omega z \approx x x^{\omega}  y_2y_1  z^\omega z \approx x  y_2y_1  z$.
\end{proof}

For every finite permutative semigroup $S$, its completely regular elements form a subsemigroup $I(S) \leq S$; see~\cite[Theorem~6.5.17]{Almeida1994}.
In fact, the same conclusion can be made if $S$ satisfies any nontrivial product identity.
Since this statement may also be of independent interest, we give a short proof based on \cref{thm:main}.

\begin{lemma}\label{lem:I-subsemi}
  Let $S$ be a finite semigroup that satisfies a nontrivial product identity.
  Then its set of completely regular elements $I(S)$ is a subsemigroup of $S$.
\end{lemma}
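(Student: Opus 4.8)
The plan is to reduce to the case of a nontrivial product identity $\varepsilon\colon x_1 \dots x_n \approx \rho(x_1, \dots, x_n)$ satisfied by $S$ and then invoke \cref{thm:main}. Suppose $s, t \in I(S)$; we must show $st \in I(S)$, i.e., $st = (st)^{\omega+1}$. First I would apply \cref{thm:main} to obtain that $\varepsilon$ implies either an almost-complete-regularity identity of the form $x_1 \dots x_n \approx x_1 \dots x_{i-1}(x_i \dots x_j)^{\omega+1} x_{j+1} \dots x_n$, or a permutation identity $x_1 \dots x_n \approx x_{1\sigma} \dots x_{n\sigma}$ with $\sigma$ nontrivial. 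These two cases are handled separately.

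In the permutative case, $S$ satisfies a permutation identity, and the result is exactly the statement of \cite[Theorem~6.5.17]{Almeida1994} cited just before the lemma; alternatively one can argue directly, but since the cited result is available there is nothing further to do. (If a self-contained argument is preferred, one notes that permutativity forces the block of "middle" variables in the permutation identity to be commutable in any nontrivial context, and then uses $s = s^{\omega+1}$, $t = t^{\omega+1}$ together with the idempotents $s^\omega, t^\omega$ to massage $st$ into the form $(st)^\omega (st)$; but I would simply cite Almeida here.)

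In the almost-completely-regular case, $S$ satisfies $x_1 \dots x_n \approx x_1 \dots x_{i-1}(x_i \dots x_j)^{\omega+1} x_{j+1} \dots x_n$ for some $1 \le i \le j \le n$. The key idea is to substitute cleverly chosen completely regular elements and idempotents for the variables so that the left-hand side evaluates to $st$ while the right-hand side exhibits $st$ as a power. Concretely, take $x_i, \dots, x_j$ to be filled in by $s$ and $t$ (and, if $j > i+1$, by idempotents such as $s^\omega$ or $t^\omega$ so that $x_i \dots x_j$ evaluates to $st$), and fill the outer variables $x_1, \dots, x_{i-1}$ and $x_{j+1}, \dots, x_n$ with $s^\omega$ and $t^\omega$ respectively (using $n \ge 1$; when $i = 1$ or $j = n$ there are simply no outer variables on that side). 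Under this substitution the left-hand side becomes $s^\omega \cdots (st) \cdots t^\omega = s^\omega s t\, t^\omega = st$ using $s^\omega s = s$ and $t t^\omega = t$, while the right-hand side becomes $s^\omega (st)^{\omega+1} t^\omega = (st)^{\omega+1}$ after absorbing the flanking idempotents into the completely regular element $(st)^{\omega+1}$ (noting $(st)^{\omega+1} = (st)^\omega(st)(st)^\omega$ lies in the ideal on which $s^\omega$ and $t^\omega$ may act; a short computation confirms $s^\omega(st)^{\omega+1} = (st)^{\omega+1}$ because $(st)^{\omega+1} = s\cdot t(st)^\omega$ and $s^\omega s = s$, and symmetrically on the right). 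Hence $st = (st)^{\omega+1}$, so $st \in I(S)$. The main obstacle is getting these substitutions exactly right in the degenerate cases $i = j$, $i = 1$, or $j = n$, and verifying that the flanking idempotents $s^\omega$, $t^\omega$ really are absorbed — but each of these is a routine computation with the identities $s^\omega s = s$, $t t^\omega = t$, $s^\omega s^\omega = s^\omega$, and the definition of $\omega$-power.
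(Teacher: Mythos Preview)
Your proposal is correct and follows essentially the same route as the paper: invoke \cref{thm:main} and then handle the two resulting cases by substituting $s$, $t$, $s^\omega$, $t^\omega$ into the relevant identity so that the left side collapses to $st$ and the right side to $(st)^{\omega+1}$. The only differences are cosmetic: in the expansion case the paper first restricts (without loss of generality) to $1 < i \le j < n$ and places the single element $s_1s_2$ into the slot $x_i$, with $s_2^\omega$ filling $x_{i+1},\dots,x_n$, which sidesteps your separate treatment of the boundary cases $i=j$, $i=1$, $j=n$; and in the permutative case the paper gives a short self-contained computation via the Putcha--Yaqub consequence $x_1\cdots x_p\,y_1y_2\,z_1\cdots z_q \approx x_1\cdots x_p\,y_2y_1\,z_1\cdots z_q$ rather than citing \cite[Theorem~6.5.17]{Almeida1994}.
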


\begin{proof}
  Suppose that $s_1, s_2 \in I(S)$, i.e., $s_1,s_2 \in S$ satisfy $s_1 = s_1^{\omega + 1}$ and $s_2 = s_2^{\omega + 1}$.
  We need to show that $s_1s_2 = (s_1s_2)^{\omega + 1}$.
  As $S$ satisfies a nontrivial product identity, it satisfies one of the identities in \cref{thm:main}.
  We distinguish between two cases.

  In the first case, we assume that $S$ satisfies the regular expansion identity
  \[
    S \models x_1 \dots x_n \approx x_1 \dots x_{i-1}(x_i \dots x_j)^{\omega + 1}x_{j+1} \dots x_n
  \]
  where, without loss of generality, $1 < i \leq j < n$.
  We then substitute the value~$s_1^\omega$ for $x_1, \dots, x_{i-1}$, the value~$s_1s_2$ for $x_i$, and the value~$s_2^\omega$ for $x_{i+1}, \dots, x_n$.
  This yields
  \[
    s_1 s_2 = s_1^\omega s_1s_2 s_2^\omega = s_1^\omega(s_1s_2)^{\omega + 1} s_2^\omega = (s_1s_2)^{\omega + 1}.
  \]

  In the second case, we assume that $S$ satisfies a permutation identity.
  It is well-known \cite[Theorem~1]{PutchaYaqub1971} (see also \cite[Exercise~6.3.9]{Almeida1994}) that this implies 
  \[
    S \models x_1 \dots x_p  y_1 y_2  z_1 \dots z_q \approx x_1 \dots x_p  y_2 y_1  z_1 \dots z_q
  \]
  for sufficiently large $p,q \geq 0$, repeated use of which yields the second equality in 
  \[
    s_1s_2 = s_1^\omega s_1 (s_1^\omega s_2^\omega) s_2 s_2^\omega = s_1^\omega s_1 (s_2s_1)^\omega s_2 s_2^\omega = (s_1s_2)^{\omega+1}. \qedhere
  \]
\end{proof}

\begin{proof}[Proof of \cref{thm:permutative}]
  The conditions $\pv{T} \not\subseteq \pv{V}$ and $\pv{V} \subseteq \pv{Perm}$ are clearly necessary.
  To see that they are also sufficient, let us first note that, by \cref{thm:main,thm:obstructions}, the condition $\pv{T} \not\subseteq \pv{V}$ implies that $\pv{V}$ satisfies a permutation identity, in which case there is nothing to prove, or the regular expansion identity
  \[
    x_1 \dots x_n \approx x_1 \dots x_{i-1}(x_i \dots x_j)^{\omega + 1}x_{j+1} \dots x_n
  \]
  for some parameters $1 \leq i \leq j \leq n$, which we will henceforth assume.
  It follows that, for some sufficiently large $k \geq 1$, $\pv{V} \subseteq \pvI{x y z \approx x y^{\omega + 1} z} \pvM \pv{N}_k$.\footnote{A finite semigroup $S$ belongs to the Mal'cev product $\pv{W} \pvM \pv{N}_k$ if and only if its ideal $S^k \leq S$ belongs to $\pv{W}$.
  Accordingly, defining identities for $\pv{W} \pvM \pv{N}_k$ may be obtained from those for $\pv{W}$ by replacing every variable $x$ with a product $x_1 \dots x_k$ of new variables \cite[Theorem~4.1]{PinWeil1996}.}

  Suppose that $\pv{V} \subseteq \pv{Perm}$.
  It then suffices to show that $\pv{V}' \coloneqq \pv{V} \cap \pvI{x y z \approx x y^{\omega + 1} z}$ is a permutative pseudovariety, since this implies the permutativity of $\pv{V} \subseteq \pv{V}' \pvM \pv{N}_k$, and we will do so by showing that $\pv{V}' \models x_1x_2y_1y_2z_1z_2 \approx x_1x_2y_2y_1z_1z_2$.
  To this end, let $S \in \pv{V}'$.
  By \cref{lem:I-subsemi}, the set $I(S)$ is a subsemigroup of $S$; hence, $I(S) \in \pv{V}'$.
  In turn, this implies that $I(S) \in \pv{Perm}$ and, by \cref{lem:I-medial}, that $I(S)$ is medial.
  Using this together with the identity $xyz \approx xy^{\omega+1}z$, we finally obtain
  \begin{align*}
    S \models x_1x_2y_1y_2z_1z_2
    &\approx x_1x_2^{\omega+1}y_1^{\omega+1} y_2^{\omega+1}z_1^{\omega+1} z_2 \\
    &\approx x_1x_2^{\omega+1}y_2^{\omega+1} y_1^{\omega+1}z_1^{\omega+1} z_2 
      \approx x_1x_2y_2y_1z_1z_2. \qedhere
  \end{align*}
\end{proof}

\section*{Acknowledgements}

The author would like to thank Markus Lohrey, Florian Stober, and Armin Weiß for valuable discussions, and gratefully acknowledges funding by the Deutsche Forschungs\-gemeinschaft (DFG, German
Research Foundation) -- LO~748/15-1.

\bibliographystyle{plainurl}
\bibliography{references}

\begin{thebibliography}{10}

\bibitem{Adian2010}
S. Adian.
\newblock The {B}urnside problem and related topics.
\newblock {\em Russian Math. Surveys}, 65(5):805--855, 2010.
\newblock \href {https://doi.org/10.1070/RM2010v065n05ABEH004702}
  {\path{doi:10.1070/RM2010v065n05ABEH004702}}.

\bibitem{Almeida1985}
J. Almeida.
\newblock Minimal non-permutative pseudovarieties of semigroups. {III}.
\newblock {\em Algebra Universalis}, 21(2-3):256--279, 1985.
\newblock \href {https://doi.org/10.1007/BF01188061}
  {\path{doi:10.1007/BF01188061}}.

\bibitem{Almeida1986a}
J. Almeida.
\newblock Minimal non-permutative pseudovarieties of semigroups. {I}.
\newblock {\em Pacific J. Math.}, 121(2):257--270, 1986.

\bibitem{Almeida1986b}
J. Almeida.
\newblock Minimal non-permutative pseudovarieties of semigroups. {II}.
\newblock {\em Pacific J. Math.}, 121(2):271--279, 1986.

\bibitem{Almeida1994}
J. Almeida.
\newblock {\em Finite {S}emigroups and {U}niversal {A}lgebra}, volume~3 of {\em
  Ser. Algebra}.
\newblock World Scientific Publishing Co., Inc., River Edge, NJ, 1994.
\newblock \href {https://doi.org/10.1142/2481} {\path{doi:10.1142/2481}}.

\bibitem{AlmeidaReilly1984}
J. Almeida and N.~R. Reilly.
\newblock Generalized varieties of commutative and nilpotent semigroups.
\newblock {\em Semigroup Forum}, 30(1):77--98, 1984.
\newblock \href {https://doi.org/10.1007/BF02573439}
  {\path{doi:10.1007/BF02573439}}.

\bibitem{Birkhoff1935}
G. Birkhoff.
\newblock On the structure of abstract algebras.
\newblock {\em Math. Proc. Cambridge Philos. Soc.}, 31(4):433--454, 1935.
\newblock \href {https://doi.org/10.1017/S0305004100013463}
  {\path{doi:10.1017/S0305004100013463}}.

\bibitem{Biryukov1970}
A.~P. Biryukov.
\newblock Varieties of idempotent semigroups.
\newblock {\em Algebra Logic}, 9(3):153--164, 1970.
\newblock \href {https://doi.org/10.1007/BF02218673}
  {\path{doi:10.1007/BF02218673}}.

\bibitem{Burnside1902}
W. Burnside.
\newblock On an unsettled question in the theory of discountinuous groups.
\newblock {\em Quart. J. Pure Appl. Math.}, 33:230--238, 1902.

\bibitem{Costa2002}
J.~C. Costa.
\newblock Some pseudovariety joins involving locally trivial semigroups.
\newblock {\em Semigroup Forum}, 64(1):12--28, 2002.
\newblock \href {https://doi.org/10.1007/s002330010060}
  {\path{doi:10.1007/s002330010060}}.

\bibitem{Edmunds1977}
C.~C. Edmunds.
\newblock On certain finitely based varieties of semigroups.
\newblock {\em Semigroup Forum}, 15(1):21--39, 1977/78.
\newblock \href {https://doi.org/10.1007/BF02195732}
  {\path{doi:10.1007/BF02195732}}.

\bibitem{Edmunds1980}
C.~C. Edmunds.
\newblock Varieties generated by semigroups of order four.
\newblock {\em Semigroup Forum}, 21(1):67--81, 1980.
\newblock \href {https://doi.org/10.1007/BF02572537}
  {\path{doi:10.1007/BF02572537}}.

\bibitem{Fennemore1971}
C.~F. Fennemore.
\newblock All varieties of bands {I}, {II}.
\newblock {\em Math. Nachr.}, 48(1-6):237--252; ibid., 253--262, 1971.
\newblock \href {https://doi.org/10.1002/mana.19710480118}
  {\path{doi:10.1002/mana.19710480118}}.

\bibitem{Gerhard1970}
J.~A. Gerhard.
\newblock The lattice of equational classes of idempotent semigroups.
\newblock {\em J. Algebra}, 15(2):195--224, 1970.
\newblock \href {https://doi.org/10.1016/0021-8693(70)90073-6}
  {\path{doi:10.1016/0021-8693(70)90073-6}}.

\bibitem{Kowol1976}
G. Kowol.
\newblock Conditions for the commutativity of semigroups.
\newblock {\em Proc. Amer. Math. Soc.}, 56:85--88, 1976.
\newblock \href {https://doi.org/10.2307/2041580} {\path{doi:10.2307/2041580}}.

\bibitem{MargolisPin1984}
S.~W. Margolis and J.-E. Pin.
\newblock Minimal noncommutative varieties and power varieties.
\newblock {\em Pacific J. Math.}, 111(1):125--135, 1984.

\bibitem{Moravec2019}
P. Moravec.
\newblock On {T}amura's identity {$yx=f(x,y)$} in groups.
\newblock {\em Comm. Algebra}, 47(5):2204--2208, 2019.
\newblock \href {https://doi.org/10.1080/00927872.2018.1530252}
  {\path{doi:10.1080/00927872.2018.1530252}}.

\bibitem{Moravec2020}
P. Moravec.
\newblock Erratum to: {O}n {T}amura's identity {$yx=f(x,y)$} in groups.
\newblock {\em Comm. Algebra}, 48(5):2273, 2020.
\newblock \href {https://doi.org/10.1080/00927872.2019.1709479}
  {\path{doi:10.1080/00927872.2019.1709479}}.

\bibitem{PinWeil1996}
J.-E. Pin and P. Weil.
\newblock Profinite semigroups, {M}al'cev products, and identities.
\newblock {\em J. Algebra}, 182(3):604--626, 1996.
\newblock \href {https://doi.org/10.1006/jabr.1996.0192}
  {\path{doi:10.1006/jabr.1996.0192}}.

\bibitem{PutchaWeissglass1971}
M.~S. Putcha and J. Weissglass.
\newblock Semigroups satisfying variable identities.
\newblock {\em Semigroup Forum}, 3(1):64--67, 1971/72.
\newblock \href {https://doi.org/10.1007/BF02572943}
  {\path{doi:10.1007/BF02572943}}.

\bibitem{PutchaWeissglass1972}
M.~S. Putcha and J. Weissglass.
\newblock Semigroups satisfying variable identities. {II}.
\newblock {\em Trans. Amer. Math. Soc.}, 168:113--119, 1972.
\newblock \href {https://doi.org/10.2307/1996164} {\path{doi:10.2307/1996164}}.

\bibitem{PutchaYaqub1971}
M.~S. Putcha and A. Yaqub.
\newblock Semigroups satisfying permutation identities.
\newblock {\em Semigroup Forum}, 3(1):68--73, 1971/72.
\newblock \href {https://doi.org/10.1007/BF02572944}
  {\path{doi:10.1007/BF02572944}}.

\bibitem{Rasin1979}
V.~V. Rasin.
\newblock On the lattice of varieties of completely simple semigroups.
\newblock {\em Semigroup Forum}, 17(2):113--122, 1979.
\newblock \href {https://doi.org/10.1007/BF02194314}
  {\path{doi:10.1007/BF02194314}}.

\bibitem{Redei1947}
L. R\'edei.
\newblock Das ``schiefe {P}rodukt'' in der {G}ruppentheorie.
\newblock {\em Comment. Math. Helv.}, 20:225--264, 1947.
\newblock \href {https://doi.org/10.1007/BF02568131}
  {\path{doi:10.1007/BF02568131}}.

\bibitem{Reiterman1982}
J. Reiterman.
\newblock The {B}irkhoff theorem for finite algebras.
\newblock {\em Algebra Universalis}, 14(1):1--10, 1982.
\newblock \href {https://doi.org/10.1007/BF02483902}
  {\path{doi:10.1007/BF02483902}}.

\bibitem{RhodesSteinberg2009}
J. Rhodes and B. Steinberg.
\newblock {\em The $q$-theory of {F}inite {S}emigroups}.
\newblock Springer Monogr. Math. Springer, New York, NY, 2009.
\newblock \href {https://doi.org/10.1007/b104443} {\path{doi:10.1007/b104443}}.

\bibitem{Stein2014}
S. Stein.
\newblock Semigroup identities and proofs.
\newblock {\em Algebra Universalis}, 71(4):359--373, 2014.
\newblock \href {https://doi.org/10.1007/s00012-014-0280-5}
  {\path{doi:10.1007/s00012-014-0280-5}}.

\bibitem{Tamura1969}
T. Tamura.
\newblock Semigroups satisfying identity {$xy=f(x,\,y)$}.
\newblock {\em Pacific J. Math.}, 31:513--521, 1969.

\bibitem{YamadaKimura1958}
M. Yamada and N. Kimura.
\newblock Note on idempotent semigroups. {II}.
\newblock {\em Proc. Japan Acad.}, 34:110--112, 1958.
\newblock \href {https://doi.org/10.3792/pja/1195524790}
  {\path{doi:10.3792/pja/1195524790}}.

\end{thebibliography}

\end{document}